\def\boxit{$\sqcap\kern-8pt\sqcup$}
\def\littbox{\null\hfill\boxit{}}
\newtheorem{thm}{Theorem}
\newtheorem{lem}{Lemma}
\newtheorem{prop}{Proposition}
\newcommand{\N}{\mathbb{N}}
\newcommand{\Z}{\mathbb{Z}}
\newcommand{\R}{\mathbb{R}}
\newcommand{\m}{\mathfrak{m}}
\begin{document}
\date{December 15, 2012}
\title[On the M\"{o}bius function of the locally finite poset associated with a numerical semigroup]{On the M\"{o}bius function of the locally finite poset associated with a numerical semigroup}

\author[Jonathan CHAPPELON]{Jonathan CHAPPELON}
\address{Institut de Math\'ematiques et de Mod\'elisation de Montpellier, Universit\'e Montpellier 2, 
Place Eug\`ene Bataillon, 34095 Montpellier}
\email{jonathan.chappelon@math.univ-montp2.fr}
\urladdr{http://www.math.univ-montp2.fr/~chappelon/}

\author[Jorge Luis Ram\'{i}rez Alfons\'{i}n]{Jorge Luis Ram\'{i}rez Alfons\'{i}n}
\address{Institut de Math\'ematiques et de Mod\'elisation de Montpellier, Universit\'e Montpellier 2, 
Place Eug\`ene Bataillon, 34095 Montpellier}
\email{jramirez@math.univ-montp2.fr}
\urladdr{http://www.math.univ-montp2.fr/~ramirez/}

\begin{abstract}
Let $S$ be a numerical semigroup and let $(\Z,\leq_S)$ be the (locally finite) poset induced by $S$ on the set of integers $\Z$ defined by $x \leq_S y$ if and only if $y-x\in S$ for all integers $x$ and $y$. In this paper, we investigate the {\it M\"{o}bius function} associated to $(\Z,\leq_S)$ when $S$ is an {\it arithmetic} semigroup. 
\end{abstract}
\maketitle
\noindent
\textbf{Keywords:} M\"{o}bius function,  poset, numerical semigroup, arithmetic semigroup
\smallskip

\noindent
\textbf{MSC 2010:} 20M15 ; 05A99 ; 06A07 ; 11A25 ; 20M05

\section{Introduction} \label{intro} 

The {\it M\"{o}bius function} is an important concept associated to ({\it locally finite}) posets.  M\"{o}bius function can be considered as a generalization of the classical M\"{o}bius arithmetic function on the integers (given by the  M\"{o}bius function of the poset obtained from the  positive integers partially ordered by the divisibility). 
M\"{o}bius function  has been extremely useful to investigate many different problems.
For instance, the {\it inclusion-exclusion principle} can be retrieved by considering the set of all subsets of a finite set partially ordered by inclusion. We refer the reader to \cite{Rota} for a large number of applications of the M\"{o}bius function. 
\smallskip

In this paper, we investigate the M\"{o}bius function associated to posets arising naturally from numerical semigroups as follows. Let $a_1,a_2,\ldots,a_n$ be $n\geq1$ relatively prime positive integers and let $S=\left\langle a_1,a_2,\ldots,a_n\right\rangle$ denote the {\it numerical semigroup} generated by $a_1,a_2,\ldots,a_n$, that is,
$$
S = \left\langle a_1,a_2,\ldots,a_n \right\rangle = \left\{ x_1a_1 + x_2a_2 + \cdots + x_na_n \ \middle|\ x_1,x_2,\ldots,x_n\in\N \right\}.
$$
Throughout this paper, we consider the structure of the poset induced by $S$ on the set of integers $\Z$, whose partial order $\leq_S$ is defined by
$$
x \leq_S y\quad \Longleftrightarrow\quad y-x\in S,
$$
for all integers $x$ and $y$. This (locally finite) poset will be denoted by $(\Z,\leq_S)$. 
\smallskip

We denote by $\mu_S$ the  M\"{o}bius function associated to $(\Z,\leq_S)$. As far as we are aware, the only known result concerning $\mu_S$ is  an old theorem due to Deddens \cite{Deddens} that determines the value of $\mu_S$ when $S$ has exactly two generators.
Here, we shall introduce and develop a new approach to investigate $\mu_S$ when $S$ is  an {\it arithmetic semigroup},  that is, when $S=\langle a,a+d,\dots, a+kd\rangle$ for some integers $a,d$ and $k\le a-1$. 
\smallskip

This is a self-contained paper and it is organized as follows. In the next section, we review some classic notions of the M\"{o}bius function and present some results  needed for the rest of the paper. In Section \ref{Dedd}, we give a new direct proof of Deddens' result, shorter than the original one (based on a recursive case-by-case analysis). In Section \ref{prelim}, we discuss results about arithmetic semigroups, in particular, we prove the existence of  {\it unique} representations. The latter is a key result that will be used, in Section \ref{recurs},  to give a recursive formula for $\mu_S$ when $S=\langle a,a+d,\dots, a+kd\rangle$. Finally, in Section \ref{pair}, we propose an explicit formula for $\mu_S$ (based on the {\it multiplicity function} of a multiset)  in the case when $k=2$ and $a$ is even.

Background information on numerical semigroups can be found in the books \cite{R1,RS}.
\section{M\"{o}bius function}\label{basic}

Let $(P,\leq )$ be a partially ordered set, or {\it poset} for short. The {\it strict partial order} $<_P$ is the reduction of $\leq_P$ given by, $a <_P b$ if and only if $a\leq_P b$ and $a\neq b$. For any $a$ and $b$ in the poset $P$, the {\it segments} between $a$ and $b$ are defined by
$$
\begin{array}{l@{\ =\ }l@{\quad\quad}l@{\ =\ }l}
{\left[a,b\right]}_{P} & \left\{ c\in P\ \middle|\ a\leq_P c\leq_P b \right\}, & {\left]a,b\right]}_{P} & \left\{ c\in P\ \middle|\ a <_P c\leq_P b \right\}, \\[2ex]
{\left[a,b\right[}_{P} & \left\{ c\in P\ \middle|\ a\leq_P c <_P b \right\}, & {\left]a,b\right[}_{P} & \left\{ c\in P\ \middle|\ a <_P c <_P b \right\}. \\
\end{array}
$$
A poset is said to be {\it locally finite} if every segment has finite cardinality. In this paper, we only consider locally finite posets.
\par Let $a$ and $b$ be elements of the poset $P$. A {\it chain} of length $l\geq 0$ between $a$ and $b$ is a subset of ${\left[a,b\right]}_{P}$ containing $a$ and $b$, with cardinality $l+1$ and totally ordered by $<$, that is $\left\{ a_0,a_1,\ldots,a_l\right\}\subset{\left[a,b\right]}_{P}$ such that
$$
a=a_0 <_P a_1 <_P a_2 <_P \cdots <_P a_{l-1} <_P a_l = b.
$$
For any nonnegative integer $l$, we denote by $C_l(a,b)$ the set of all chains of length $l$ between $a$ and $b$. The cardinality of $C_l(a,b)$ is denoted by $c_l(a,b)$. This number is always finite because the poset $P$ is supposed to be locally finite. For instance, the number of chains $c_2(2,12)$, where the poset is the set $\N$ partially ordered by divisibility, is equal to $2$. Indeed, there are exactly $2$ chains of length $2$ between $2$ and $12$ in ${\left[2,12\right]}_{\N}=\left\{2,4,6,12\right\}$, which are $\left\{2,4,12\right\}$ and $\left\{2,6,12\right\}$.

For any locally finite poset $P$, the {\it M\"{o}bius function} $\mu_{P}$ is the integer-valued function on $P\times P$ defined by
\begin{equation}\label{eq1}
\mu_{P}(a,b) = \sum_{l\geq 0}{{(-1)}^{l}c_l(a,b)},
\end{equation}
for all elements $a$ and $b$ of the poset $P$. One can remark that this sum is always finite because, for $a$ and $b$ given, there exists a maximal length of a possible chain between $a$ and $b$ since the segment ${\left[a,b\right]}_{P}$ has finite cardinality.

The concept of M\"{o}bius function for a locally finite poset $(P,\le )$ was introduced by Rota in \cite{Rota} as the inverse of the zeta function in the incidence algebra of a locally finite poset. Let us see this with more detail. Consider the set $\mathcal{I}(P)$ of all real-valued functions $f : P\times P \longrightarrow \R$ for which $f(a,b)=0$ if $a{\not\leq}_{P}b$. The sum $+$ and the multiplication by scalars $.$ are defined as usual in $\mathcal{I}(P)$. The product of two functions $f$ and $g$ in $\mathcal{I}(P)$ is defined by
$$
(f\times g)(a,b) = \sum_{c\in{\left[a,b\right]}_{P}}f(a,c)g(c,b),
$$
for all $(a,b)\in P\times P$. Then $\left(\mathcal{I}(P),+,.,\times\right)$ appears as an associative algebra over $\R$. This is the {\it incidence algebra} of $P$. The {\it Kronecker delta function} $\delta\in\mathcal{I}(P)$, defined by
$$
\delta(a,b) = \left\{\begin{array}{ll}
1 & \text{if}\ a=b,\\
0 & \text{otherwise},
\end{array}\right.
$$
for all $(a,b)\in P\times P$, is the identity element of $\mathcal{I}(P)$. The {\it zeta function} ${\zeta}_{P}\in\mathcal{I}(P)$ is defined by
$$
{\zeta}_{P}(a,b) = \left\{\begin{array}{ll}
1 & \text{if}\ a\leq_{P}b,\\
0 & \text{otherwise},
\end{array}\right.
$$
for all $(a,b)\in P\times P$. 

Rota \cite{Rota} proved that the zeta function ${\zeta}_{P}$ (called the {\it inverse function}) is invertible in $\mathcal{I}(P)$ and  showed that $\mu_{P}$ is recursively defined as follows: for all $(a,b)\in P\times P$, by
\begin{equation}\label{eq2}
\mu_{P}(a,a) = 1 \quad \text{and} \quad \mu_{P}(a,b) = -\sum_{c\in{\left[a,b\right[}_{P}}\mu_{P}(a,c) \quad \text{if}\ a<_P b.
\end{equation}
Let us see that both definitions of $\mu_S$ given by~(\ref{eq1}) and by~(\ref{eq2})  are equivalent. For, let $a$ and $b$ 
be two elements of the locally finite poset $P$ such that $a<_{P}b$. Then,
\begin{equation}\label{eq3}
c_l(a,b) = \sum_{c\in{\left[a,b\right[}_{P}}c_{l-1}(a,c) = \sum_{c\in{\left]a,b\right]}_{P}}c_{l-1}(c,b),
\end{equation}
for all positive integers $l$. Indeed,
every chain $\left\{a_0,a_1,\ldots,a_l\right\}\in C_l(a,b)$ can be seen like an extension of a chain of $C_{l-1}(a,a_{l-1})$ or of $C_{l-1}(a_1,b)$.

Obviously, the identity $\mu_S(a,a)=1$ directly comes from (\ref{eq1}) since $c_0(a,a)=1$ and $c_l(a,a)=0$ for all $l\geq 1$. By combining (\ref{eq3}) and (\ref{eq1}), for all $a<_P b$, we obtain that
$$
\mu_P(a,b) = \sum_{l\geq 0}{{(-1)}^{l}c_l(a,b)} = c_0(a,b) + \sum_{l\geq 1}{{(-1)}^{l}\sum_{c\in{\left[a,b\right[}_{P}}c_{l-1}(a,c)}.
$$
Finally, since $a\neq b$, it follows that $c_0(a,b)=0$ and thus
$$
\mu_P(a,b) = \sum_{c\in{\left[a,b\right[}_{P}}\sum_{l\geq 0}{{(-1)}^{l+1}c_{l}(a,c)} = -\sum_{c\in{\left[a,b\right[}_{P}}\mu_{P}(a,c).
$$
Similarly, using the second identity of (\ref{eq3}), we can also prove that, whenever $a<_P b$, we have
$$
\mu_P(a,b)=-\sum_{c\in{\left]a,b\right]}_{P}}\mu_P(c,b).
$$
Therefore the two definitions of the M\"{o}bius function (for a locally finite posets) are the same.
All the results presented in this paper are derived from the recursive formula presented in (\ref{eq2}). 

\subsection{Poset of integers induced by a numerical semigroup}

Let $S$ be a numerical semigroup and  $(\Z,\leq_S)$ its associated poset. Observe that
$(\Z,\leq_S)$ is a locally finite poset since $\left| {\left[x,y\right]}_{(\Z,\leq_S)} \right| \leq y-x$, for all $x,y\in\Z$.
It is easy to see that $\mu_S$ can be considered as a univariable function of $\Z$. Indeed,
for all $x,y\in\Z $ and for all $p\ge 0$, we have

\begin{equation}\label{c1}
 c_l(x,y)=c_l(0,y-x).
\end{equation}

The above follows since the set $C_l(x,y)$ is in bijection with $C_l(0,y-x)$. Indeed the map that assigns the chain $\left\{x_0,x_1,\ldots,x_l\right\}\in C_l(x,y)$ to the chain $\left\{0,x_1-x_0,\ldots,x_l-x_0\right\}\in C_l(0,y-x)$ is clearly a bijection.
Thus, by definition of $\mu_S$ and equality \eqref{c1} we obtain

$$\mu_S(x,y)=\mu_S(0,y-x)$$

for all $x,y\in\Z$.

In the sequel of this paper we shall only consider the reduced M\"obius function $\mu_S : \Z \longrightarrow \Z$ defined by
$$
\mu_S(x) = \mu_S(0,x),\quad \text{for\ all}\ x\in\Z.
$$

This recursive formula given by (\ref{eq2}) can be more easily presented when the locally finite poset is $(\Z,\leq_S)$.

\begin{prop}\label{in} Let $S$ be a numerical semigroup and let $x\in\Z\setminus\{0\}$. Then,
$$
\mu_S(x) = -\sum_{y\in S\setminus\{0\}}\mu_S(x-y)
\quad
\Longleftrightarrow
\quad
\sum_{y\in S}\mu_S(x-y) = 0.
$$
\end{prop}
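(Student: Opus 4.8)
The plan is to treat the displayed equivalence as a purely formal observation and then to establish one of the two identities directly from the recursive definition~(\ref{eq2}). Since every numerical semigroup contains $0$, one can split the index set as $S=\{0\}\cup(S\setminus\{0\})$ and isolate the term $y=0$:
\[
\sum_{y\in S}\mu_S(x-y)=\mu_S(x)+\sum_{y\in S\setminus\{0\}}\mu_S(x-y).
\]
Thus $\sum_{y\in S}\mu_S(x-y)=0$ holds if and only if $\mu_S(x)=-\sum_{y\in S\setminus\{0\}}\mu_S(x-y)$, and it suffices to prove this last identity for every $x\in\Z\setminus\{0\}$. I would argue in two cases, according to whether or not $x\in S$.

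When $x\in S\setminus\{0\}$ we have $0<_S x$, so the recursive formula~(\ref{eq2}), read for the reduced function, gives $\mu_S(x)=\mu_S(0,x)=-\sum_{c\in{\left[0,x\right[}_{S}}\mu_S(c)$. The main step is then to reindex this sum by the substitution $c=x-y$. As $c$ ranges over ${\left[0,x\right[}_{S}$, the constraints $0\leq_S c$, $c\leq_S x$ and $c\neq x$ translate exactly into $c\in S$, $y=x-c\in S$ and $y\neq0$; conversely, any $y\in S\setminus\{0\}$ with $x-y\notin S$ contributes nothing, because the reduced function satisfies $\mu_S(z)=0$ whenever $z\notin S$ and $z\neq0$ (there being no chain from $0$ to $z$ in~(\ref{eq1})). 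Matching the two index sets term by term yields $\sum_{c\in{\left[0,x\right[}_{S}}\mu_S(c)=\sum_{y\in S\setminus\{0\}}\mu_S(x-y)$, which is the desired identity.

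When $x\notin S$ (and $x\neq0$) both sides vanish. The left-hand side is $\mu_S(x)=\mu_S(0,x)=0$, again because $0\not\leq_S x$ leaves no chain in~(\ref{eq1}). For the right-hand side, a term $\mu_S(x-y)$ with $y\in S\setminus\{0\}$ can be nonzero only if $x-y\in S$; but then $x=y+(x-y)$ would be a sum of two elements of $S$, forcing $x\in S$ and contradicting the hypothesis. Hence every term of the sum is zero, and the identity $\mu_S(x)=0=-\sum_{y\in S\setminus\{0\}}\mu_S(x-y)$ holds trivially.

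The only delicate point is the bookkeeping in the first case: one must check that passing between the order-theoretic segment ${\left[0,x\right[}_{S}$ and the arithmetic sum over $S\setminus\{0\}$ neither drops a nonzero contribution nor adds a spurious one. This is handled cleanly by the vanishing of $\mu_S$ outside $S\cup\{0\}$, which lets one enlarge the summation range from $\{y\in S\setminus\{0\}:x-y\in S\}$ to all of $S\setminus\{0\}$ without changing the value. I expect no genuine obstacle beyond this careful identification of the two index sets.
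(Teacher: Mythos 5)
Your proof is correct and follows essentially the same route as the paper's: apply the recursion~(\ref{eq2}), identify the segment ${\left[0,x\right[}$ with $\{y\in S : x-y\in S\setminus\{0\}\}$, reindex by $y\mapsto x-y$, and use the vanishing of $\mu_S$ outside $S$ to enlarge the summation range. Your explicit treatment of the case $x\notin S$ is a minor refinement the paper leaves implicit, but it does not change the argument.
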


\begin{proof}
From (\ref{eq2}), we deduce that
$$
\mu_S(x) = -\sum_{y\in{\left[0,x\right[}_{(\Z,\leq_S)}}\mu_S(y) = -\sum_{\stackrel{y\in S}{x-y\in S\setminus\{0\}}}\mu_S(y) = 
 -\sum_{\stackrel{x-y\in S}{y\in S\setminus\{0\}}}\mu_S(x-y).
$$
The result follows since, by definition of $\mu_S$, $\mu_S(x-y)=0$ unless $x-y\in S$.
\end{proof}


\section{Deddens' result : new proof}\label{Dedd}

In \cite{Deddens}, Deddens proved the following. 

\begin{thm}\cite{Deddens}\label{thmdedd}
Let $a$ and $b$ be two relatively positive integers and let $S=\left\langle a,b\right\rangle$. Then, for all $x\in\Z$, we have
$$
\mu_S(x) = \left\{\begin{array}{rl}
 1 & \text{if}\ x\geq0\ \text{and}\ x\equiv 0\ \text{or}\ a+b \pmod{ab}, \\
 -1 & \text{if}\ x\geq0\ \text{and}\ x\equiv a\ \text{or}\ b \pmod{ab}, \\
 0 & \text{otherwise}.
\end{array}\right.
$$
\end{thm}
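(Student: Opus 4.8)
The plan is to translate the recursion of Proposition~\ref{in} into an identity of formal power series and then simply invert it. Since $x\notin S$ forces $\mu_S(x)=0$, and in particular $\mu_S(x)=0$ for every $x<0$, the two series
$$
M(t)=\sum_{x\geq 0}\mu_S(x)\,t^x\in\R[[t]]\qquad\text{and}\qquad P(t)=\sum_{s\in S}t^s\in\R[[t]]
$$
are well defined, and $P(t)$ has constant term $1$ because $0\in S$, hence is invertible. The coefficient of $t^x$ in the Cauchy product $P(t)M(t)$ is $\sum_{s\in S}\mu_S(x-s)$, which by Proposition~\ref{in} equals $0$ for $x\neq 0$ and equals $\mu_S(0)=1$ for $x=0$. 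Therefore $P(t)M(t)=1$, so that $M(t)=P(t)^{-1}$, and the whole problem reduces to computing $P(t)$ and reading off the coefficients of its inverse.

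Next I would compute $P(t)$ using the coprimality of $a$ and $b$. The key point is that every $n\in S$ admits a \emph{unique} representation $n=ia+jb$ with $0\leq i\leq b-1$ and $j\geq 0$: existence follows by reducing the first coordinate of any representation modulo $b$ (which only increases $j$), and uniqueness follows from $\gcd(a,b)=1$, since $ia+jb=i'a+j'b$ with $0\leq i,i'\leq b-1$ forces $b\mid(i-i')$ and hence $i=i'$. Consequently
$$
P(t)=\left(\sum_{i=0}^{b-1}t^{ia}\right)\left(\sum_{j\geq 0}t^{jb}\right)=\frac{1-t^{ab}}{(1-t^a)(1-t^b)},
$$
where the last equality uses $1-t^{ab}=(1-t^a)\sum_{i=0}^{b-1}t^{ia}$.

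Inverting the above then gives
$$
M(t)=\frac{(1-t^a)(1-t^b)}{1-t^{ab}}=\bigl(1-t^a-t^b+t^{a+b}\bigr)\sum_{k\geq 0}t^{kab}.
$$
Assuming $a,b\geq 2$ (the degenerate case $\min(a,b)=1$ gives $S=\N$ and is handled directly), the four exponents $0,a,b,a+b$ are pairwise distinct and all lie in $\{0,1,\dots,ab-1\}$, since $a+b<ab$ is equivalent to $(a-1)(b-1)>1$. Hence the coefficient of $t^x$ in $M(t)$ is $+1$ when $x\equiv 0$ or $a+b$, is $-1$ when $x\equiv a$ or $b$, and is $0$ otherwise, all taken modulo $ab$ and for $x\geq 0$; this is exactly the claimed value of $\mu_S(x)$.

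The main obstacle, and the only genuinely semigroup-theoretic step, is establishing the closed form of $P(t)$, that is, the unique-representation statement; once this is in hand the remaining manipulations are purely formal. One should also take care to justify that $P(t)M(t)=1$ is a valid identity in $\R[[t]]$ (the boundary constraint $x\geq 0$ being automatically encoded by the vanishing of $\mu_S$ on the negatives) and to dispose of the degenerate cases $a=1$ or $b=1$ separately.
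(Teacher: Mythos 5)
Your proof is correct, but it takes a genuinely different route from the paper's. The paper works entirely at the level of the recursion in Proposition~\ref{in}: it splits the sum over $S\setminus\{0\}$ according to whether $y-a\in S$, identifies the surviving index set with the nonzero part of the Ap\'ery set, namely $\{b,2b,\dots,(a-1)b\}$, and then telescopes the resulting recursion against its shift by $b$ to obtain the periodicity $\mu_S(x)=\mu_S(x-ab)$ for all $x\notin\{0,a,b,a+b\}$, finishing with a direct computation of the four base values (including $\mu_S(a+b)$ via chain counts). You instead encode Proposition~\ref{in} as the identity $P(t)M(t)=1$ in $\R[[t]]$ and compute $P(t)$ in closed form. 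The semigroup-theoretic content is the same in both arguments --- your unique representation $n=ia+jb$ with $0\le i\le b-1$, $j\ge0$ is exactly the statement that $Ap(S;a)=\{0,b,\dots,(a-1)b\}$ --- but the generating-function formulation eliminates the telescoping and the separate base-case verifications, at the cost of introducing formal power series. It also makes explicit a general principle, valid for any numerical semigroup, that $\sum_{x\ge0}\mu_S(x)t^x$ is the inverse of the Hilbert series of $S$; this is a cleaner organizing statement than the recursion and foreshadows the harder computations in the paper's later sections. Your side conditions are handled adequately: invertibility of $P$ from its constant term, pairwise distinctness of $0,a,b,a+b$ modulo $ab$ when $a,b\ge2$, and the flagged degenerate case $\min(a,b)=1$ (where the congruence classes in the theorem's statement collide, so it must indeed be excluded or treated separately, as the paper also tacitly does).
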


Dedden's proof was based on a recursive argument and a case-by-case analysis.
We may give the following direct proof of Theorem \ref{thmdedd}.

{\it Proof of Theorem \ref{thmdedd}.} We shall prove that 
\begin{equation}\label{r0}
\mu_S(x)=\mu_S(x-ab)
\end{equation}

for every $x\in\Z\setminus\{0,a,b,a+b\}$. The result then follows since 
$\mu_S(x)=0$ for all $x<0$, $\mu_S(0)=1$, $\mu_S(a)=\mu_S(b)=-1$ and $\mu_S(a+b)=c_2(0,a+b)-c_1(0,a+b)=2-1=1$.
\smallskip

Let us prove then equality \eqref{r0}. Let $S=\left\langle a,b\right\rangle = \left\{m_aa+m_bb\ \middle|\ m_a,m_b\in\N \right\}$
and let  $x\in \Z\setminus\{0\}$. By Proposition \ref{in} we already know that
$$
\mu_S(x) = -\sum_{y\in S\setminus\{0\}}\mu_S(x-y) = -\sum_{\stackrel{y\in S\setminus\{0\}}{y-a\in S}}\mu_S(x-y) -\sum_{\stackrel{y\in S\setminus\{0\}}{y-a\notin S}}\mu_S(x-y).
$$
Since
$$
\sum_{\stackrel{y\in S\setminus\{0\}}{y-a\in S}}\mu_S(x-y) = \sum_{z\in S}\mu_S((x-a)-z) = 0,\quad \text{for}\ x-a\neq 0,
$$
then 
$$
\mu_S(x) = -\sum_{\stackrel{y\in S\setminus\{0\}}{y-a\notin S}}\mu_S(x-y),\quad \text{for}\ x\in \Z\setminus\{0,a\}.
$$

Moreover since  $\left\{y\in S\setminus\{0\}\ \middle|\ y-a\notin S\right\}=\left\{ m_b b\ \middle|\ m_b\in\{1,2,\ldots,a-1\}\right\}$
then
\begin{equation}\label{r1}
\mu_S(x) = -\sum_{m_b=1}^{a-1}\mu_S(x-m_bb),\quad \text{for}\ x\in \Z\setminus\{0,a\}.
\end{equation}

By applying \eqref{r1} for $x-b\in \Z\setminus\{0,a\}$, that is, $x\in \Z\setminus\{b,a+b\}$ we obtain that

\begin{equation}\label{r2}
\mu_S(x-b) = -\sum_{m_b=2}^{a}\mu_S(x-m_bb),\quad \text{for}\ x-b\in \Z\setminus\{0,a\}.
\end{equation}

By combining \eqref{r1} and \eqref{r2}, for $x\in \Z\setminus\{0,a,b,a+b\}$, we obtain that

$$
\mu_S(x)
\begin{array}[t]{l}
 = -\displaystyle\sum_{m_b=1}^{a-1}\mu_S(x-m_bb) = -\mu_S(x-b)-\sum_{m_b=2}^{a-1}\mu_S(x-m_bb) \\
 = \displaystyle\sum_{m_b=2}^{a}\mu_S(x-m_bb)-\sum_{m_b=2}^{a-1}\mu_S(x-m_bb) \\[3ex]
 = \mu_S(x-ab)
\end{array}
$$

as desired.
\littbox


\section{Arithmetic semigroups : preliminary results}\label{prelim}

Let $S$ be a numerical semigroup.
The {\it Ap\'{e}ry set} of $S$ with respect with $m\in S$ is defined as
$$
Ap(S;m) = \left\{ x\in S\ \middle|\ x-m\notin S \right\}.
$$
It is known that $Ap(S;m)$ constitutes a complete set a of residues $\bmod m$.

Roberts \cite{Ro1} has proved that if $S=\left\langle a,a+d,\ldots,a+kd\right\rangle$ with $\gcd(a,d)=1$ and $k\in\{1,2,\ldots,a-1\}$ then
\begin{equation}\label{rob}
Ap(S;a) = \left\{ \left\lceil \frac{i}{k} \right\rceil a + id\ \middle|\ i\in\{0,1,\ldots,a-1\} \right\}.
\end{equation}

The following result gives a unique representation of elements in arithmetic semigroups.

\begin{lem}\label{rep}
Let $x\in S=\left\langle a,a+d,\ldots,a+kd\right\rangle$ with $2\le k\le a-1$. Then, there exists a unique triplet $(x_0,x_i,x_k)\in \N\times \{0,1\}\times \{0,\dots,\lceil\frac{a}{k}\rceil\}$ such that
$$
x = x_0 a + x_i (a+id) + x_k (a+kd)
$$
for some $1\leq i\leq k-1$ with $i x_i + k x_k  < a$. 
\end{lem}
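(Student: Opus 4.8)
The plan is to reduce everything to the Apéry decomposition of $S$ with respect to $a$, exploiting the explicit description of $Ap(S;a)$ recalled in~(\ref{rob}) together with $\gcd(a,d)=1$ (which holds because $S$ is a numerical semigroup, as $\gcd(a,a+d,\ldots,a+kd)=\gcd(a,d)$). I will use the standard consequence of "$Ap(S;a)$ is a complete set of residues mod $a$", namely that every $x\in S$ decomposes uniquely as $x=\lambda a+w$ with $\lambda\in\N$ and $w\in Ap(S;a)$. The first step is to identify the triplet representations in the statement with these Apéry elements: setting $w=x_i(a+id)+x_k(a+kd)=(x_i+x_k)a+(ix_i+kx_k)d$ and $j:=ix_i+kx_k$, the constraint $0\le j<a$ places $j$ in $\{0,\dots,a-1\}$, and splitting on $x_i\in\{0,1\}$ gives $x_i+x_k=\lceil j/k\rceil$ (the case $x_i=0$ is immediate, and for $x_i=1$ one uses $1\le i\le k-1$ so that $\lceil i/k\rceil=1$). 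Hence $w=\lceil j/k\rceil a+jd$ is exactly the element of $Ap(S;a)$ indexed by $j$ in~(\ref{rob}).

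For existence I would start from $x=\lambda a+w$ with $w=\lceil j/k\rceil a+jd$ for the unique $j\in\{0,\dots,a-1\}$, and perform Euclidean division $j=qk+r$ with $0\le r\le k-1$. Taking $x_0=\lambda$, $x_k=q$, and either $(x_i,i)=(0,1)$ when $r=0$ or $(x_i,i)=(1,r)$ when $1\le r\le k-1$, the identity $\lceil j/k\rceil=q+x_i$ yields $x=x_0a+x_i(a+id)+x_k(a+kd)$. The constraint holds since $ix_i+kx_k=j<a$, and $kx_k\le j<a$ forces $x_k<a/k\le\lceil a/k\rceil$, so $x_k$ lies in the prescribed range.

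For uniqueness, suppose two admissible triplets represent the same $x$. Reducing $x=(x_0+x_i+x_k)a+(ix_i+kx_k)d$ modulo $a$ gives $x\equiv(ix_i+kx_k)d\pmod a$; since $\gcd(a,d)=1$ the map $n\mapsto nd$ is a bijection of $\{0,\dots,a-1\}$, and both quantities $ix_i+kx_k$ lie in $[0,a)$ by the constraint, so they agree. Call the common value $n$. I then recover all data from $n$ alone: reducing modulo $k$ and using $1\le i\le k-1$, one has $ix_i\equiv0\pmod k$ if and only if $x_i=0$, so $k\mid n$ forces $x_i=0$ while otherwise $x_i=1$ and $i=n\bmod k$; in both cases $x_k=\lfloor n/k\rfloor$, and finally matching the coefficient of $a$ determines $x_0$. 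Thus the triplet, and the index $i$ whenever $x_i=1$, is pinned down by $x$.

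I expect no genuine obstacle, only careful bookkeeping: one must check that the Euclidean-division correspondence $j\leftrightarrow(x_i,i,x_k)$ matches the admissible ranges and the inequality $ix_i+kx_k<a$ exactly, and in particular that $i$ is immaterial precisely when $x_i=0$, so that the existential "for some $i$" does not disturb uniqueness of the triplet $(x_0,x_i,x_k)$. The substantive content lies entirely in the two congruence arguments — modulo $a$ to recover $n$, and modulo $k$ to split it into its components.
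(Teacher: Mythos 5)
Your proof is correct, but the existence half follows a genuinely different route from the paper's. The paper proves existence by hand: starting from an arbitrary expansion $x=\sum_j x_j(a+jd)$ it repeatedly merges pairs of generators via $(a+j_1d)+(a+j_2d)=a+(a+(j_1+j_2)d)$ or $=(a+kd)+(a+(j_1+j_2-k)d)$, and then enforces the bounds $x_k\le\lfloor a/k\rfloor$ and $ix_i+kx_k<a$ with two further explicit identities trading copies of $a$ against multiples of $d$. You instead outsource all of this to Roberts' description \eqref{rob} of $Ap(S;a)$ together with the standard unique decomposition $x=\lambda a+w$ with $w\in Ap(S;a)$, and identify the admissible triplets with the pairs $(\lambda,j)$ via Euclidean division of $j=ix_i+kx_k$ by $k$; the bookkeeping identity $x_i+x_k=\lceil j/k\rceil$, which you verify by splitting on $x_i\in\{0,1\}$, is exactly what makes this correspondence work, and your range check $x_k<a/k$ is sound. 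Your route is shorter and has the added virtue of making explicit the bijection with $Ap(S;a)$ that the paper only exploits afterwards, in the case analysis opening the proof of Theorem \ref{thm}; the cost is that it rests on Roberts' theorem, which the paper cites without proof, whereas the paper's argument for the lemma is self-contained. The uniqueness halves are essentially the same: both reduce modulo $a$, use $\gcd(a,d)=1$ and the bound $ix_i+kx_k<a$ to equate $ix_i+kx_k$ with $jy_j+ky_k$, and then separate the components modulo $k$ --- you recover $(x_i,i,x_k)$ as quotient and remainder of the common value by $k$ where the paper runs a four-case analysis, and you correctly note that $i$ is undetermined (and immaterial) exactly when $x_i=0$.
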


\begin{proof} Let $x\in S$ and $x_0,\ldots,x_k\in\N$ such that
\begin{equation}\label{eqq}
x=x_0a+x_1(a+d)+x_2(a+2d)+\cdots+x_k(a+kd).
\end{equation}

\underline{{\it Existence}}: Let $j_1$ and $j_2$ be two integers such that $0\le j_1,j_2\le k$.
We notice that $(a+j_1d)+(a+j_2d)$ can be expressed as either

$$\hbox{$(a+j_1d) + (a+j_2d) = a + (a+(j_1+j_2)d)$, for $0\le j_1+j_2\le k$}$$
or

$$\hbox{$(a+j_1d) + (a+j_2d) = (a+kd) + (a+(j_1+j_2-k)d)$, for $k\le j_1+j_2\le 2k$.}$$

So, by repeatedly adding consecutive terms from the expression of $x$ in \eqref{eqq}, we obtain that 
 there exists a triplet $(x_0,x_i,x_k)$ such that
$$
x = x_0a + x_i(a+id) + x_k(a+kd),
$$
with $1\leq i\leq k-1$ and $x_i\in\{0,1\}$. Moreover, we may suppose that $0\le x_k \le \left\lfloor a/k \right\rfloor$. Otherwise, we use the following equality
$$
\left( \left\lfloor \frac{a}{k} \right\rfloor + 1 \right)(a+kd) \begin{array}[t]{l}
= \displaystyle\left( \left\lfloor \frac{a}{k} \right\rfloor + 1 \right)a + \left( \left\lfloor \frac{a}{k} \right\rfloor k + k \right)d \\[2ex]
= \displaystyle\left( \left\lfloor \frac{a}{k} \right\rfloor + d + 1 \right)a + \left( \left\lfloor \frac{a}{k} \right\rfloor k + k - a \right)d \\[2ex]
= \displaystyle\left( \left\lfloor \frac{a}{k} \right\rfloor + d \right)a + \left( a + \left( \left\lfloor \frac{a}{k} \right\rfloor k + k - a \right)d \right),
\end{array}
$$
where $1\le \left\lfloor a/k \right\rfloor k + k - a \le k$.

Finally, if $i x_i+k x_k\ge a$ then we consider the following representation
 $$
x \begin{array}[t]{l}
= x_0a + x_i(a+id) + x_k(a+kd) \\[2ex]
= \displaystyle\left( x_0+x_i+ x_k \right)a + \left( i x_i+ k x_k\right)d \\[2ex]
= \displaystyle\left( x_0+ x_i+ x_k+d \right)a + \left( i x_i+ k x_k - a \right)d \\[2ex]
= \displaystyle\left( x_0+ x_i + x_k+ d - 1 \right)a + \left( a + \left( i x_i+ k x_k- a \right)d \right),
\end{array}
$$
where $0 \le i x_i+k x_k - a\le ix_i+k\left\lfloor{a}/{k}\right\rfloor - a \le k -1 + \left\lfloor{a}/{k}\right\rfloor k - a\le k-1$. 
Obtaining the desired decomposition.
\medskip

\underline{{\it Uniqueness}}: Let us suppose that there exist two triplets of non-negative integers $(x_0,x_i,x_k)$ and $(y_0,y_j,y_k)$ such that
$$
x_0 a + x_i (a+id) + x_k (a+kd) = y_0 a + y_j(a+jd) + y_k (a+kd),
$$
with $1\le i,j\le k-1$, $x_i,y_j\in\{0,1\}$, $i x_i+k x_k  < a$ and $j y_j +k y_k  < a$. It follows that
$$
(i x_i + k x_k )d\equiv (j y_j + k y_k )d \pmod{a},
$$
and since $\gcd(a,d)=1$ then
$$
i x_i + x_k k \equiv j y_j + k y_k  \pmod{a}
$$
Moreover, since $ i x_i + k x_k  < a$ and $ j y_j+ k y_k < a$, then
$$
i x_i + k x_k = j y_j+ k y_k \text{ or equivalently } i x_i-j y_j=k(y_k-x_k) .
$$
We have four cases.
\smallskip

Case 1) if $x_i=0$ and $y_j=1$ then $-j$ would be a multiple of $k$ which is impossible since $-1\ge -j\ge -k$
\smallskip

Case 2) if $x_i=1$ and $y_j=0$ then $i$ would be a multiple of $k$ which is impossible since $1\le i\le k$
\smallskip

Case 3) if $x_i=y_j=1$ then $i-j$ would be a multiple of $k$ but  since $-k+2\le i-j\le k-2$ then $i-j=0$ implying that
$x_k=y_k$ and thus $x_0=y_0$.
\smallskip

Case 4) if $x_i=y_j=0$ then $k(x_k-y_k)=0$ and since $k\ge 1$ then $x_k=y_k$ and thus $x_0=y_0$.
\end{proof}

Let $(x_0,x_i,x_k)\in \N\times \{0,1\}\times \{0,\dots ,\left\lfloor a/k \right\rfloor\}$ with $1\leq i\leq k-1$ and $i x_i + k x_k  < a$.
We shall denote  by $[x_0,x_i,x_k]$ the element in $S$ given by the representation of Lemma \ref{rep}. 

\section{Recursive formula}\label{recurs}

We shall now present a recursive formula for $\mu_S$ when $S=\left\langle a,a+d,\ldots,a+kd\right\rangle$. 
The following key remark led us to guess such recursion. If $x=m_aa+m_dd$ such that $m_a\geq 0$ and $0\leq m_d\leq a-1$ then,
$$
x\in S\quad \Longleftrightarrow\quad m_a\geq \left\lceil\frac{m_d}{k} \right\rceil.
$$

\begin{thm}\label{thm} Let $S=\left\langle a,a+d,\ldots,a+kd\right\rangle$ with $\gcd(a,d)=1$ and let $a=qk+r$ with $0\le r<k$. Let $x\in\Z\setminus\{0,a,a+kd,a+(a+kd)\}$, then

$$\mu_S(x)=\left\{\begin{array}{ll}
\mu_S(x-q(a+kd)) + \displaystyle\sum_{i=1}^{k-1}\mu_S(x-(a+id)-q(a+kd)) \\- \mu_S(x-(a+id)) & \text{ if } r=0,\\
\mu_S(x-(q+1)(a+kd)) 
+ \displaystyle\sum\limits_{i=r}^{k-1}\mu_S(x-(a+id)-q(a+kd)) \\ - \displaystyle\sum_{i=1}^{k-1}\mu_S(x-(a+id)) & \text{ if } r=1,\\
\mu_S(x-(q+1)(a+kd)) + \displaystyle\sum_{i=1}^{r-1}\mu_S(x-(a+id)-(q+1)(a+kd)) \\
+ \displaystyle\sum_{i=r}^{k-1}\mu_S(x-(a+id)-q(a+kd)) - \displaystyle\sum_{i=1}^{k-1}\mu_S(x-(a+id)) & \text{ if } r\ge 2.\\
\end{array}\right.$$
\end{thm}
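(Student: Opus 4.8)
The plan is to reduce everything to the basic recurrence in Proposition~\ref{in}, namely $\sum_{y\in S}\mu_S(x-y)=0$ for $x\neq 0$, exactly as in the new proof of Deddens' theorem. The idea there was to isolate those nonzero elements $y\in S$ whose ``predecessor'' $y-a$ is no longer in $S$; summing $\mu_S(x-a-z)$ over all $z\in S$ telescopes to zero, so $\mu_S(x)$ collapses to a sum over the boundary set $\{y\in S\setminus\{0\}\mid y-a\notin S\}$. That boundary set is precisely the nonzero part of the Ap\'ery set $\mathrm{Ap}(S;a)$. First I would invoke Roberts' formula~\eqref{rob}: the nonzero Ap\'ery elements are $\lceil i/k\rceil a+id$ for $i\in\{1,\dots,a-1\}$. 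So, writing $w_i:=\lceil i/k\rceil a + id$, I expect to obtain the clean intermediate identity
\begin{equation}\label{plan1}
\mu_S(x) = -\sum_{i=1}^{a-1}\mu_S(x-w_i),\qquad x\in\Z\setminus\{0,a\},
\end{equation}
in complete analogy with formula~\eqref{r1} in the two-generator case.

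The second and main step is to eliminate the long sum in~\eqref{plan1} by a telescoping argument driven by the $k$-periodicity hidden in the ceiling function. The key is that as $i$ runs over a block of $k$ consecutive residues, $\lceil i/k\rceil$ is constant and the generators $a+id$ cycle through $i\bmod k$; the unique-representation result of Lemma~\ref{rep} guarantees these decompositions do not collide, so the bookkeeping is consistent. Concretely I would write~\eqref{plan1} for the shifted argument (as in~\eqref{r2}, where one passed from $x$ to $x-b$) and subtract, so that all the ``interior'' Ap\'ery terms $w_i$ cancel in pairs and only the extremal contributions — those near $i=0$, near the block boundaries determined by $q=\lfloor a/k\rfloor$, and near $i=a-1$ — survive. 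This is where the case split on $r=a-qk\in\{0,1,\ge 2\}$ enters: the residue $r$ governs exactly how the last incomplete block of length $r$ aligns against the full blocks of length $k$, and hence which boundary terms $w_i$ fail to cancel. Re-expressing the surviving $w_i$ through the generators yields the shifts $q(a+kd)$, $(q+1)(a+kd)$ and $(a+id)$ that appear on the right-hand side.

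The hard part will be the precise index accounting in this telescoping, i.e.\ verifying that after the subtraction the residual terms are exactly those listed — $\mu_S(x-q(a+kd))$ together with the sum over $i$ of $\mu_S(x-(a+id)-q(a+kd))$ and the correction $-\sum_i\mu_S(x-(a+id))$ in the $r=0$ case, and the analogous but asymmetric families when $r=1$ or $r\ge 2$. The difficulty is entirely combinatorial: one must track which $w_i$ survive and rewrite each as $(a+id)+q(a+kd)$ or $(a+id)+(q+1)(a+kd)$ depending on whether $i<r$ or $i\ge r$, using $w_i=\lceil i/k\rceil a+id$ and the identity $a+kd$ repeatedly. I anticipate that the $r=0$ case (full blocks, $a=qk$) is the cleanest and should be proved first as a template, with $r=1$ and $r\ge 2$ following by the same mechanism once the partial-block boundary is handled; the hypothesis $x\notin\{0,a,a+kd,a+(a+kd)\}$ is exactly what is needed so that every application of~\eqref{plan1} and its shift is legitimate (each shifted argument must avoid $0$ and $a$).
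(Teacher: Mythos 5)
Your plan is correct and follows essentially the same route as the paper: both reduce to Proposition~\ref{in}, identify the surviving terms as the nonzero Ap\'ery set $Ap(S;a)\setminus\{0\}$ (parametrized via Roberts' formula and the unique representation $(a+id)+y_k(a+kd)$ of Lemma~\ref{rep}), and then telescope by writing the identity at $x-(a+kd)$ and subtracting, with the case split on $r$ governing which boundary terms survive. The paper likewise only carries out the index bookkeeping explicitly for $r=0$, so the remaining work you flag as ``the hard part'' is exactly what the paper leaves to the reader.
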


\begin{proof} Let $x\in \Z\setminus\{0\}$. As for the proof of Theorem \ref{thmdedd}, we have
$$
\mu_S(x)
\begin{array}[t]{l}
 = -\hspace{-10pt}\displaystyle\sum_{y\in S\setminus\{0\}}\mu_S(x-y) \\[4ex]
 = -\hspace{-8pt}\displaystyle\sum_{\stackrel{y\in S\setminus\{0\}}{y-a\in S}}\mu_S(x-y) -\hspace{-8pt}\sum_{\stackrel{y\in S\setminus\{0\}}{y-a\notin S}}\mu_S(x-y) \\[6ex]
 = -\hspace{-2pt}\displaystyle\sum_{z\in S}\mu_S((x-a)-z) -\hspace{-8pt}\sum_{\stackrel{y\in S\setminus\{0\}}{y-a\notin S}}\mu_S(x-y) \\
 = -\hspace{-8pt}\displaystyle\sum_{\stackrel{y\in S\setminus\{0\}}{y-a\notin S}}\mu_S(x-y)\quad\text{if}\quad x-a\neq 0.
\end{array} 
$$

Let us now determine the set
$$
\left\{y\in S\setminus\{0\}\ \middle|\ y-a\notin S\right\} = Ap(S;a)\setminus\{0\}.
$$

For, we consider the set $Ap(S;a)$ given by \eqref{rob} in function of the unique representation of Lemma \ref{rep}. 
We have three cases.

Case a) If $r=0$ then
$$
Ap(S,a)\setminus\{0\} = \begin{array}[t]{l}
\displaystyle\left\{y_k(a+kd)\ \middle|\ y_k\in\{1,\ldots,q-1\}\right\} \\
\displaystyle\bigcup \left\{ (a+id)+y_k(a+kd)\ \middle|\ \begin{array}{l} y_k\in\{0,\ldots,q-1\}\\ i\in\{1,\ldots,k-1\} \\ \end{array} \right\} \\
\end{array}
$$
Case b) If $r=1$ then
$$
Ap(S,a)\setminus\{0\} = \begin{array}[t]{l}
\displaystyle\left\{y_k(a+kd)\ \middle|\ y_k\in\{1,\ldots,q\}\right\} \\
\displaystyle\bigcup \left\{ (a+id)+y_k(a+kd)\ \middle|\ \begin{array}{l} y_k\in\{0,\ldots,q-1\}\\ i\in\{r,\ldots,k-1\} \\ \end{array} \right\} \\
\end{array}
$$
 Case c) If $r\geq 2$ then
$$
Ap(S,a)\setminus\{0\} = \begin{array}[t]{l}
\displaystyle\left\{y_k(a+kd)\ \middle|\ y_k\in\{1,\ldots,q\}\right\} \\
\displaystyle\bigcup \left\{ (a+id)+y_k(a+kd)\ \middle|\ \begin{array}{l} y_k\in\{0,\ldots,q\}\\ i\in\{1,\ldots,r-1\} \\ \end{array} \right\} \\
\displaystyle\bigcup \left\{ (a+id)+y_k(a+kd)\ \middle|\ \begin{array}{l} y_k\in\{0,\ldots,q-1\}\\ i\in\{r,\ldots,k-1\} \\ \end{array} \right\} \\
\end{array}
$$

Suppose that $r=0$, i.e. $a=qk$. For $x\in \Z\setminus\{0,a\}$, we have
\begin{equation}\label{mmu1}
\mu_S(x) = -\sum_{y_k=1}^{q-1}\mu_S(x-y_k(a+kd))-\sum_{i=1}^{k-1}\sum_{y_k=0}^{q-1}\mu_S(x-(a+id)-y_k(a+kd)).
\end{equation}

By applying \eqref{mmu1} to $x-(a+kd)\in \Z\setminus\{0,a\}$, that is, $x\in\Z\setminus\{a+kd,a+(a+kd)\}$ we obtain 

\begin{equation}\label{mmu2}
\mu_S(x-(a+kd)) = -\sum_{y_k=2}^{q}\mu_S(x-y_k(a+kd))-\sum_{i=1}^{k-1}\sum_{y_k=1}^{q}\mu_S(x-(a+id)-y_k(a+kd)).
\end{equation}

By combining \eqref{mmu1} and \eqref{mmu2} for $x\in \Z\setminus\{0,a,a+kd,a+(a+kd)\}$, we obtain 
$$
\mu_S(x) = \mu_S(x-q(a+kd)) + \sum_{i=1}^{k-1}\mu_S(x-(a+id)-q(a+kd)) - \mu_S(x-(a+id)).
$$
The cases when $r\geq 1$ are similar to Case a (and it is left to the reader as an exercise).
\end{proof}


\section{Case $\langle 2q, 2q+d,2q+2d\rangle$}\label{pair}

The {\it multiplicity function} of a multiset $A$ of $\N$ is the function
$$
\m_A : \N \longrightarrow \N
$$
which assigns to each element $x\in\N$ its multiplicity, that is, the number of times that $x$ appears in the multiset $A$.

Let $a=2q$ and $d\in\N^*$ such that $\gcd(a,d)=\gcd(q,d)=1$. For each $i\in\{-1,0,1\}$, we consider the following multisets.
$$
\begin{array}{l}
A_i = \left\{ m(q+d)+i \ \middle|\ m\in\N \right\}, \\[2ex]
B_i = \left\{ m(q+d)-nd+i \ \middle|\ m\in\N\setminus\{0,1\} , n\in\{1,2,\ldots,\left\lfloor m/2\right\rfloor\} \right\}, \\[2ex]
C_i = A_i\bigcup B_i.
\end{array}
$$
As we mentioned above, given a triple $(x_0,x_1,x_2)\in\N\times\{0,1\}\times\{0,\ldots,q-1\}$, we denote by $[x_0,x_1,x_2]$ the element in $S$ given by the representation in  Lemma \ref{rep}.
In the sequel of this section, we shall consider this representation for all $x_0\in\Z$, i.e.,
$$
[x_0,x_1,x_2] = x_0a + x_1(a+d) + x_2(a+2d)
$$
for all $(x_0,x_1,x_2)\in\Z\times\{0,1\}\times\{0,\ldots,q-1\}$. In this case, it is clear that
$$
\text{if } (x_0,x_1,x_2)\in\Z\times\{0,1\}\times\left\{0,\ldots,q-1\right\}\text{ then } [x_0,x_1,x_2]\in S\Longleftrightarrow x_0\in\N.
$$

The latter will be used in the proofs below.

\begin{thm}\label{thm1}
Let $S=<2q,2q+d,2q+2d>$. Let $(x_0,x_1,x_2)\in \Z\times \{0,1\}\times \{0,\dots,q-1\}$.
Then,
$$
\mu_S ([x_0,x_1,x_2]) = \left\{\begin{array}{ll}
{(-1)}^{x_1} \left( \m_{A_0}-\m_{A_1}+2\m_{B_0}-\m_{B_{-1}}-\m_{B_1}  \right)(x_0) & \text{ if } x_2=0,\\
{(-1)}^{x_1} \left( 2\m_{C_0}-\m_{C_{-1}}-\m_{C_1}  \right)(x_0-x_2) & \text { if } x_2\geq 1.
\end{array}\right.
$$
\end{thm}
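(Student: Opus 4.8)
The plan is to prove the formula by induction on $x_0$, feeding in the explicit recursion of Theorem \ref{thm} and reducing the whole statement to a couple of elementary identities about the multiplicity functions. Since $a=2q$ and $k=2$ we are in the case $a=qk+r$ with $r=0$ and quotient $q$, so the first branch of Theorem \ref{thm} applies: for $x\notin\{0,a,a+2d,a+(a+2d)\}$,
$$
\mu_S(x)=\mu_S(x-q(a+2d))+\mu_S(x-(a+d)-q(a+2d))-\mu_S(x-(a+d)).
$$
The key algebraic observation is the identity $q(a+2d)=(q+d)a$ (both sides equal $2q^2+2qd$), so that subtracting $q(a+2d)$ from $[x_0,x_1,x_2]$ is exactly the translation $x_0\mapsto x_0-(q+d)$, leaving $x_1,x_2$ fixed. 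Combined with the reduction rule $2(a+d)=a+(a+2d)$, which describes how subtracting $a+d$ acts on the coordinates, I would first record, for each $x_1\in\{0,1\}$ and according to whether $x_2=0$, $x_2=1$ or $x_2\ge2$, the translated form of the three terms above in the $[\,\cdot\,,\cdot\,,\cdot\,]$ notation. In every case all three arguments have strictly smaller first coordinate than $x_0$, which is what makes the induction on $x_0$ go through.

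The base of the induction is twofold. For $x_0<0$ one has $[x_0,x_1,x_2]\notin S$, so $\mu_S=0$; on the other hand, inspecting the smallest elements of $A_i$ and $B_i$ shows that both $F(x_0):=(\m_{A_0}-\m_{A_1}+2\m_{B_0}-\m_{B_{-1}}-\m_{B_1})(x_0)$ and $G(t):=(2\m_{C_0}-\m_{C_{-1}}-\m_{C_1})(t)$ vanish on the relevant negative arguments, so the formula also returns $0$. The four points excluded from Theorem \ref{thm}, namely $[0,0,0]=0$, $[1,0,0]=a$, $[0,0,1]=a+2d$ and $[1,0,1]=a+(a+2d)$, are checked by hand against the known values $\mu_S(0)=1$, $\mu_S(a)=\mu_S(a+2d)=-1$, $\mu_S(a+(a+2d))=2$ and against $F(0)=1$, $F(1)=-1$, $G(-1)=-1$, $G(0)=2$.

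For the inductive step the sign structure $(-1)^{x_1}$ should come for free. When $x_1=1$ the recursion reads $\mu_S([x_0,1,x_2])=\mu_S([x_0-(q+d),1,x_2])+\mu_S([x_0-(q+d),0,x_2])-\mu_S([x_0,0,x_2])$, and the induction hypothesis at the smaller level $x_0-(q+d)$ makes the first two terms cancel, leaving $\mu_S([x_0,1,x_2])=-\mu_S([x_0,0,x_2])$, as required. It therefore suffices to treat $x_1=0$. There the case $x_2\ge2$ is a tautology: substituting $G$ and using that $G$ depends only on $x_0-x_2$, the two shifted $G$-terms cancel and the identity reduces to $G(x_0-x_2)=G(x_0-x_2)$. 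The case $x_2=1$ reduces to the periodicity $(G-F)(t)=(G-F)(t-(q+d))$ for $t\ge1$; since a short computation gives $G-F=\m_{A_0}-\m_{A_{-1}}$, this is just the (near-)periodicity of the two progressions $A_0,A_{-1}$ of common difference $q+d$.

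The genuine obstacle is the case $x_2=0$, which amounts to the identity
$$
F(x_0)=F(x_0-(q+d))-G(x_0-3q-2d)+G(x_0-2q-d),\qquad x_0\ge2.
$$
To handle it I would introduce the backward difference $\Delta f(x)=f(x)-f(x-(q+d))$ and compute it on the building blocks. Since $A_i=\{m(q+d)+i\}$ is an arithmetic progression, $\Delta\m_{A_i}=\mathds{1}_{\{i\}}$. The heart of the matter is the analogous computation for $B_i$: the shift $m\mapsto m+1$ sets up a near-bijection of the defining pairs $(m,n)$, whose only defect is the pairs with $m$ even and $n=m/2$; collecting these shows that $\Delta\m_{B_i}(x)$ equals the indicator that $x-i$ is a positive multiple of $e:=2q+d$. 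Writing $P$ for the indicator of the positive multiples of $e$, so $\Delta\m_{B_i}=P(\cdot-i)$, and using the crucial numerical coincidence $e=(q+d)+q$ (whence $x_0-3q-2d=(x_0-e)-(q+d)$ and $x_0-2q-d=x_0-e$), the target identity becomes a finite combination of the indicators $\mathds{1}_{\{0\}},\mathds{1}_{\{\pm1\}}$ together with the telescoping relation $P(x)-P(x-e)=\mathds{1}_{\{e\}}(x)$. These collapse to $\Delta F(x_0)-(\Delta G)(x_0-e)=\mathds{1}_{\{0\}}(x_0)-\mathds{1}_{\{1\}}(x_0)$, which vanishes precisely for $x_0\ge2$, closing the induction. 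The computation of $\Delta\m_{B_i}$ is where all the care is required, since it is the only place the specific shape of $B_i$, and in particular the floor $\lfloor m/2\rfloor$ in its definition, enters.
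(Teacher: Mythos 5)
Your proof is correct and follows essentially the same route as the paper: an induction on $x_0$ driven by the $k=2$, $r=0$ case of Theorem \ref{thm}, with the same base cases (negative $x_0$ and the four excluded points) and the same structural facts about $A_i$, $B_i$, $C_i$ doing the work. The only presentational differences are that you absorb the $x_1$-flip caused by subtracting $a+d$ into the induction hypothesis (first settling $x_1=0$ at level $x_0$, then deducing $x_1=1$) instead of re-applying the recursion as in Proposition \ref{prop1}, and you encode the structure of $B_i$ through the $(q+d)$-step difference identity ($\m_{B_i}(x)-\m_{B_i}(x-(q+d))$ equals $1$ if $x-i$ is a positive multiple of $2q+d$ and $0$ otherwise) rather than through the $(2q+d)$-shift identity $\m_{B_i}=\m_{C_i}(\cdot-(2q+d))$ of Lemma \ref{lem3}; the two are equivalent and either one closes the induction.
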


We notice that if $x_0-x_2$ is a constant then we should have the same value for $\mu_S([x_0,0,x_2])$. The latter is illustrated 
by the first values of $\mu_S([x_0,0,x_2])$, listed in Table~\ref{table} given at the end of the section, for the case when $a=22$ and $d=5$. 
Indeed, we can see appearing diagonals (corresponding to $x_0-x_2$ constant) with the same value.
\medskip

Before proving Theorem \ref{thm1}, we need two lemmas and the following refinement of Theorem \ref{thm} when $k=2$ and $a$ even.

\begin{prop}\label{prop1}
Let $(x_0,x_1,x_2)\in \Z\times \{0,1\}\times \{0,\dots,q-1\}$
with $(x_0,x_2)\notin\{0,1\}\times\{0,1\}$. Then,
$$
\mu_S\left([x_0,x_1,0]\right) = \begin{array}[t]{l}
\mu_S\left([x_0-(q+d),x_1,0]\right) \\[1.5ex]
+ \mu_S\left([x_0-(q+d)-1,x_1,q-1]\right) \\[1.5ex]
- \mu_S\left([x_0-2(q+d)-1,x_1,q-1]\right) \\
\end{array}
$$
and
$$
\mu_S\left([x_0,x_1,x_2]\right) = \begin{array}[t]{l}
\mu_S\left([x_0-(q+d),x_1,x_2]\right) \\[1.5ex]
+ \mu_S\left([x_0-1,x_1,x_2-1]\right) \\[1.5ex]
- \mu_S\left([x_0-(q+d)-1,x_1,x_2-1]\right) \\
\end{array}
$$
when  $x_2\geq1$.
\end{prop}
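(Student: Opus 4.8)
The statement is a specialization of Theorem~\ref{thm} to the case $k=2$, written in terms of the bracket notation $[x_0,x_1,x_2]$. My strategy is to apply Theorem~\ref{thm} directly with $k=2$ and $a=2q$, and then translate every argument of the form $x-(\text{something})$ into bracket coordinates. Since $a=2q$ and $k=2$, we have $a=qk+r$ with $r=0$, so only the first branch of Theorem~\ref{thm} is relevant. Writing $x=[x_0,x_1,x_2]$, the generator $a+kd=2q+2d=a+2d$ appears, and $q=a/k=q$ (the quotient equals $q$ here, which is a convenient coincidence of notation). For $k=2$ the sum $\sum_{i=1}^{k-1}$ has the single term $i=1$, so Theorem~\ref{thm} reduces to
\[
\mu_S(x)=\mu_S(x-q(a+2d))+\mu_S(x-(a+d)-q(a+2d))-\mu_S(x-(a+d)).
\]

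\textbf{Translating the shifts into brackets.} The core of the work is to check that each of the four shifted arguments, expressed via Lemma~\ref{rep}, matches the brackets in the claim. First I would note that subtracting $a+d=2q+d=q+d+q=(q+d)+a/2$ from the underlying integer corresponds, at the level of the linear form $x_0a+x_1(a+d)+x_2(a+2d)$, to an adjustment that I must reconcile with the normalization $x_1\in\{0,1\}$, $x_2\in\{0,\dots,q-1\}$ forced by the unique representation. Here the even case $a=2q$ is essential: it lets the quantity $q+d$ play the role of a ``half-generator'' so that $q(a+2d)$ and $(a+d)+q(a+2d)$ reduce to clean shifts in the $x_0$-coordinate. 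Concretely, I expect $x-q(a+2d)$ to become $[x_0-(q+d),x_1,x_2]$ in the $x_2\ge 1$ case, the term $\mu_S(x-(a+d))$ to become $\mu_S([x_0-1,x_1,x_2-1])$ after lowering $x_2$ by one and absorbing the $d$, and the mixed term to become $[x_0-(q+d)-1,x_1,x_2-1]$. For $x_2=0$ the subtraction of $a+d$ cannot simply lower $x_2$; instead it must borrow, producing the $x_2=q-1$ brackets that appear in the first displayed formula, which explains structurally why the two cases look different.

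\textbf{Main obstacle and how I would handle it.} The delicate point is precisely the reduction of the arguments back into the canonical range $x_2\in\{0,\dots,q-1\}$ and $x_1\in\{0,1\}$, i.e.\ verifying that each shifted integer, once rewritten, genuinely has the claimed bracket representation in the sense of Lemma~\ref{rep} (with the constraint $ix_1+kx_2<a$, here $x_1+2x_2<2q$). I would carry this out by computing each shifted value as an explicit integer combination $x_0'a+x_1(a+d)+x_2'(a+2d)$, then invoking the \emph{uniqueness} half of Lemma~\ref{rep} to identify the coordinates: since the representation is unique, it suffices to exhibit \emph{one} valid normalized triple and check it lies in the admissible range. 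The constraint $(x_0,x_2)\notin\{0,1\}\times\{0,1\}$ is exactly what guarantees none of the four arguments equals one of the excluded values $\{0,a,a+2d,a+(a+2d)\}$ in Theorem~\ref{thm}, so I would verify this exclusion first to license the application of Theorem~\ref{thm}. The two displayed identities then follow by specializing the translated formula to $x_2=0$ and to $x_2\ge1$ respectively, with the $x_2=0$ borrow producing the $q-1$ indices. The only real bookkeeping risk is the carry in the $x_2=0$ branch and the $+1$ shifts in $x_0$; once the integer arithmetic and the range checks are laid out, the result is immediate from Theorem~\ref{thm} and the uniqueness in Lemma~\ref{rep}.
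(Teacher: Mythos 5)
Your starting point (specialize Theorem~\ref{thm} to $k=2$, $a=2q$, $r=0$, and rewrite the three shifted arguments in bracket coordinates) is exactly how the paper begins, but the translation you ``expect'' is arithmetically wrong, and the error conceals the one genuine idea of the proof. Since $[x_0-1,x_1,x_2-1]=[x_0,x_1,x_2]-a-(a+2d)=[x_0,x_1,x_2]-2(a+d)$, the term $\mu_S(x-(a+d))$ is \emph{not} $\mu_S([x_0-1,x_1,x_2-1])$. Subtracting $a+d$ in fact toggles the middle coordinate: for $x_1=0$ and $x_2\geq 1$ one has $[x_0,0,x_2]-(a+d)=[x_0-1,1,x_2-1]$, and for $x_2=0$ it is $[x_0-(q+d)-1,1,q-1]$ (using $q(a+2d)=(q+d)a$). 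Hence a single application of Theorem~\ref{thm} yields, e.g.\ for $x_1=0$ and $x_2\geq1$,
$$
\mu_S([x_0,0,x_2])=\mu_S([x_0-(q+d),0,x_2])+\mu_S([x_0-(q+d)-1,1,x_2-1])-\mu_S([x_0-1,1,x_2-1]),
$$
whose last two terms carry $x_1=1$. This is a different identity from the one you must prove (note also that the roles of the arguments $x_0-(q+d)-1$ and $x_0-2(q+d)-1$ are swapped in the $x_2=0$ case), and no appeal to the uniqueness half of Lemma~\ref{rep} can convert one into the other: uniqueness only tells you what the correct bracket of each shifted integer is, and it is not the bracket you wrote down.

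The missing step is a \emph{second} application of the same recursion, namely to $\mu_S([x_0-1,1,x_2-1])$ (respectively to $\mu_S([x_0-(q+d)-1,1,q-1])$ when $x_2=0$). That expansion produces $\mu_S([x_0-(q+d)-1,1,x_2-1])$ with the opposite sign, so the two opposite-parity terms cancel and what survives is precisely the stated formula, with every bracket carrying the original $x_1$ and with the signs on the two $x_2-1$ terms reversed relative to the one-step identity. This cancellation is the actual content of the proposition; without it your ``translated formula'' is only a restatement of Theorem~\ref{thm}. Your observation that $(x_0,x_2)\notin\{0,1\}\times\{0,1\}$ rules out the excluded values $\{0,a,a+2d,2(a+d)\}$ is correct and needed, but the analogous check must also be made for the argument of the second application.
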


\begin{proof}
From Theorem~\ref{thm}, we have

\begin{equation}\label{mu1}
\mu_S\left( [x_0,x_1,x_2]\right) = \begin{array}[t]{l}
\mu_S\left( [x_0,x_1,x_2] - [q+d,0,0] \right) + \mu_S\left( [x_0,x_1,x_2] - [q+d,1,0] \right) \\
 - \mu_S\left( [x_0,x_1,x_2] - [0,1,0] \right).
\end{array}
\end{equation}

Notice that $q(a+2d)=(q+d)a$ and $a+(a+2d)=2(a+d)$. 

Case a) If $x_1=0$ and $x_2=0$ then, from \eqref{mu1} we obtain

\begin{equation}\label{mu2}
\mu_S\left( [x_0,0,0]\right) \begin{array}[t]{l}
= \begin{array}[t]{l}
\mu_S\left( [x_0,0,0] - [q+d,0,0] \right) + \mu_S\left( [x_0,0,0] - [q+d,1,0] \right) \\
 - \mu_S\left( [x_0,0,0] - [0,1,0] \right).
\end{array} \\
= \begin{array}[t]{l}
\mu_S\left( [x_0-(q+d),0,0] \right) + \mu_S\left( [x_0-2(q+d)-1,1,q-1] \right) \\
 - \mu_S\left( [x_0-(q+d)-1,1,q-1] \right).
\end{array} \\
\end{array}
\end{equation}

By applying the  recursive equality \eqref{mu1} to $\mu_S\left( [x_0-(q+d)-1,1,q-1] \right)$, we obtain 
\begin{equation}\label{mu3}
\mu_S\left( [x_0-(q+d)-1,1,q-1]\right) = \begin{array}[t]{l}
   \displaystyle\mu_S\left( [x_0-2(q+d)-1,1,q-1] \right) \\[2ex]
 + \displaystyle\mu_S\left( [x_0-2(q+d)-1,0,q-1]\right) \\[2ex]
 - \displaystyle\mu_S\left( [x_0-(q+d)-1,0,q-1]\right).
\end{array}
\end{equation}

Finally, by combining equations \eqref{mu2} and \eqref{mu3} we have
$$
\mu_S\left([x_0,0,0]\right) = \begin{array}[t]{l}
\mu_S\left([x_0-(q+d),0,0]\right) \\[1.5ex]
+ \mu_S\left([x_0-(q+d)-1,0,q-1]\right) \\[1.5ex]
- \mu_S\left([x_0-2(q+d)-1,0,q-1]\right). \\
\end{array}
$$
Case b) If  $x_1=0$ and $x_2\ge1$ then, from \eqref{mu1} we obtain

\begin{equation}\label{mu4}
\mu_S\left( [x_0,0,x_2]\right) \begin{array}[t]{l}
= \begin{array}[t]{l}
\mu_S\left( [x_0,0,x_2] - x[q+d,0,0] \right) + \mu_S\left( [x_0,0,x_2] - [q+d,1,0] \right) \\
 - \mu_S\left( [x_0,0,x_2] - x[0,1,0] \right)
\end{array} \\
= \begin{array}[t]{l}
\mu_S\left( [x_0-(q+d),0,x_2] \right) + \mu_S\left( [x_0-(q+d)-1,1,x_2-1] \right) \\
 - \mu_S\left( [x_0-1,1,x_2-1] \right).
\end{array} \\
\end{array}
\end{equation}

By applying the  recursive equality \eqref{mu1} to $\mu_S\left( [x_0-1,1,x_2-1] \right)$, we obtain 

\begin{equation}\label{mu4}
\mu_S\left( [x_0-1,1,x_2-1]\right) = \begin{array}[t]{l}
   \displaystyle\mu_S\left( [x_0-(q+d)-1,1,x_2-1] \right) \\[2ex]
 + \displaystyle\mu_S\left( [x_0-(q+d)-1,0,x_2-1]\right) \\[2ex]
 - \displaystyle\mu_S\left( [x_0-1,0,x_2-1]\right).
\end{array}
\end{equation}
Finally, by combining equations \eqref{mu3} and \eqref{mu4} we have
$$
\mu_S\left([x_0,0,x_2]\right) = \begin{array}[t]{l}
\mu_S\left([x_0-(q+d),0,x_2]\right) \\[1.5ex]
+ \mu_S\left([x_0-1,0,x_2-1]\right) \\[1.5ex]
- \mu_S\left([x_0-(q+d)-1,0,x_2-1]\right). \\
\end{array}
$$
This concludes the proof for $x_1=0$. The proof for the case $x_1=1$ is similar as the above case and it is left to the reader.
\end{proof}



\begin{lem}\label{lem2} Let $i\in\{-1,0,1\}$. For all $x\in\Z\setminus\{i\}$,  $\m_{A_i}(x) = \m_{A_i}(x-(q+d))$.
\end{lem}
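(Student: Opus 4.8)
The plan is to reduce the multiplicity function $\m_{A_i}$ to an indicator function and then compare the defining conditions for its value at $x$ and at $x-(q+d)$. First I would observe that, since $q+d\ge 1$, the map $m\mapsto m(q+d)+i$ is injective on $\N$, so distinct values of $m$ produce distinct elements of the multiset $A_i$. Consequently every element of $A_i$ occurs with multiplicity at most $1$, and $\m_{A_i}(x)\in\{0,1\}$ with
$$
\m_{A_i}(x)=1\quad\Longleftrightarrow\quad x-i\in(q+d)\N,
$$
that is, $x-i$ is a nonnegative integer multiple of $q+d$.

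Next I would write out the analogous condition for the shifted argument. Replacing $x$ by $x-(q+d)$ gives
$$
\m_{A_i}(x-(q+d))=1\quad\Longleftrightarrow\quad (x-(q+d))-i\in(q+d)\N\quad\Longleftrightarrow\quad x-i\in(q+d)\N^*,
$$
where $(q+d)\N^*$ denotes the set of strictly positive multiples of $q+d$. The two membership conditions $x-i\in(q+d)\N$ and $x-i\in(q+d)\N^*$ coincide except for the single value $x-i=0$, i.e.\ $x=i$, which lies in the first set but not in the second.

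Therefore, as soon as $x\neq i$, the two indicators take the same value, giving $\m_{A_i}(x)=\m_{A_i}(x-(q+d))$, which is exactly the claim. The only point requiring care---and the reason the hypothesis $x\neq i$ appears---is the boundary case $x=i$: there $\m_{A_i}(i)=1$ (arising from the term $m=0$) while $\m_{A_i}(i-(q+d))=0$, so the identity genuinely fails at that point. Beyond tracking this single exception there is no real obstacle; the argument is a direct bookkeeping check once the multiset is recognized as a set indexed injectively by $m$.
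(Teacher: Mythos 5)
Your proof is correct and follows essentially the same route as the paper: both arguments rest on the observation that $A_i$ is really a set (the map $m\mapsto m(q+d)+i$ being injective) and that shifting by $q+d$ corresponds to $m\mapsto m-1$, with $x=i$ (the case $m=0$) as the sole exception. Your indicator-function phrasing and the paper's case split on $x\lessgtr q+d+i$ are just two ways of doing the same bookkeeping.
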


\begin{proof}
By definition of the sets $A_i$, for any integer $x\leq q+d+i-1$ such that $x\neq i$, we have
$$
\m_{A_i}(x) = \m_{A_i}(x-(q+d)) = 0.
$$
For any integer $x\geq q+d+i$, we obtain
$$
\begin{array}[t]{ll}
x\in A_i & \Longleftrightarrow \ \text{there exists}\ m\in\N \ \text{with}\ \ x=m(q+d)+i \\
&\Longleftrightarrow \ \text{there exists}\ m\in\N \ \text{with}\ \ x-(q+d)=(m-1)(q+d)+i \\
&\Longleftrightarrow \ x-(q+d)\in A_i.
\end{array}
$$
This completes the proof.
\end{proof}

\begin{lem}\label{lem3} Let $i\in\{-1,0,1\}$. For all $x\in\Z$, $\m_{B_i}(x) = \m_{C_i}(x-(2q+d))$.
\end{lem}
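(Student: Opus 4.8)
The plan is to recast the statement entirely in terms of multiplicity functions and then to match the pairs $(m,n)$ being counted on each side by an explicit reindexing. Since $C_i=A_i\cup B_i$ is a multiset union, we have $\m_{C_i}=\m_{A_i}+\m_{B_i}$, so the asserted identity is equivalent to
\[
\m_{B_i}(x)=\m_{A_i}\big(x-(2q+d)\big)+\m_{B_i}\big(x-(2q+d)\big).
\]
Recall that $\m_{B_i}(x)$ is the number of pairs $(m,n)$ with $m\geq 2$ and $1\leq n\leq\lfloor m/2\rfloor$ satisfying $m(q+d)-nd+i=x$, whereas $\m_{A_i}(y)\in\{0,1\}$ records whether $y=m(q+d)+i$ for some $m\in\N$. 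I would prove the displayed equality by accounting for these pair-counts term by term.

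The key algebraic observation is that $2q+d=2(q+d)-d$. Hence the equation $m'(q+d)-n'd+i=x-(2q+d)$ defining a contribution to $\m_{B_i}(x-(2q+d))$ is the same as $(m'+2)(q+d)-(n'+1)d+i=x$, which suggests the substitution $(m,n)=(m'+2,n'+1)$. Accordingly, I would first split the count $\m_{B_i}(x)$ according to whether $n=1$ or $n\geq 2$; the constraint $1\leq n\leq\lfloor m/2\rfloor$ forces $m\geq 2$ in the first stratum and $m\geq 2n\geq 4$ in the second.

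For the stratum $n\geq 2$, the map $(m,n)\mapsto(m-2,n-1)$ should be a bijection onto the pairs counted by $\m_{B_i}(x-(2q+d))$; the only point to check is that the bounds transform correctly, namely that $2\leq n\leq\lfloor m/2\rfloor$ with $m\geq 4$ is equivalent to $1\leq n-1\leq\lfloor(m-2)/2\rfloor$ with $m-2\geq 2$, which follows from the identity $\lfloor(m-2)/2\rfloor=\lfloor m/2\rfloor-1$. For the stratum $n=1$, a pair $(m,1)$ with $m\geq 2$ solving $m(q+d)-d+i=x$ corresponds, through $m''=m-2\geq 0$, to the relation $x-(2q+d)=m''(q+d)+i$; since $q+d>0$ there is at most one such $m$, so this sub-count equals exactly $\m_{A_i}(x-(2q+d))\in\{0,1\}$. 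Adding the two strata then yields the displayed identity, and hence the lemma.

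I expect the main obstacle to be the careful bookkeeping of the index bounds under the shift $m\mapsto m-2$ — in particular the floor identity above and the verification that the $n=1$ stratum of $B_i$ maps onto the genuine set $A_i$ without losing or duplicating any multiplicity. I would also dispose of the degenerate range, where $x-(2q+d)$ is too small for either $A_i$ or $B_i$ to contribute and both sides vanish, so that the argument covers all $x\in\Z$.
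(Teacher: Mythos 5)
Your proof is correct and follows essentially the same route as the paper: both rest on the reindexing $(m,n)\mapsto(m-2,n-1)$ coming from $2q+d=2(q+d)-d$; the paper packages the two strata into the single parametrization $C_i=\left\{m(q+d)-nd+i \ \middle|\ m\in\N,\ 0\le n\le\left\lfloor m/2\right\rfloor\right\}$, whereas you split off the $n=1$ layer as $\m_{A_i}$ explicitly, which is only a cosmetic difference.
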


\begin{proof}
By definition of the multisets $B_i$ and $C_i$, for any integer $x\leq 2q+d+i-1$, we have
$$
\m_{B_i}(x) = \m_{C_i}(x-(2q+d)) = 0.
$$
For any integer $x\geq 2q+d+i$, we obtain
$$
\begin{array}[t]{ll}
x\in B_i & \Longleftrightarrow \ \text{there exists}\ m\in\N\setminus\{0,1\},\ 1\leq n\leq \left\lfloor m/2\right\rfloor \ \text{with}\ \ x=m(q+d)-nd+i \\
& \Longleftrightarrow \ \text{there exists}\ m\in\N\setminus\{0,1\},\ 1\leq n\leq \left\lfloor m/2\right\rfloor \ \text{with}\ \ x-(2q+d)=\begin{array}[t]{l}(m-2)(q+d)\\ -(n-1)d+i \end{array}\\
& \Longleftrightarrow \ \text{there exists}\ m\in\N,\ 0\leq n\leq \left\lfloor m/2\right\rfloor \ \text{with}\ \ x-(2q+d)=m(q+d)-nd+i \\
& \Longleftrightarrow \ x-(2q+d)\in C_i.
\end{array}
$$
This completes the proof.
\end{proof}



We may now prove Theorem \ref{thm1}.

{\it Proof of Theorem \ref{thm1}}. By double induction on $x_0$ and $x_2$.
\par For $x_0<0$, since $[x_0,x_1,x_2]\notin S$ and $C_i\cap (\Z\setminus\N) = \emptyset$ for all $i\in\{-1,0,1\}$, it follows that
$$
\mu_S([x_0,x_1,0]) = {(-1)}^{x_1}(\m_{A_0}-\m_{A_1}+2\m_{B_0}-\m_{B_{-1}}-\m_{B_{1}})(x_0) = 0,
$$
and
$$
\mu_S([x_0,x_1,x_2]) = {(-1)}^{x_1}(2\m_{C_0}-\m_{C_{-1}}-\m_{C_{1}})(x_0-x_2) = 0,
$$
for all $x_2\in\{1,\ldots,q-1\}$.
\par Now, for $x_0\in\N$, we suppose that the theorem is true for all values lesser than $x_0$ and all $x_2\in\{0,\ldots,q-1\}$. We distinguish different cases according to the values of $x_2$.

\par Case a) $x_2=0$. \\
For $x_0=0$, since $\mu_S([0,0,0])=\mu_S(0)=1$ and $\mu_S([0,1,0])=\mu_S(a+d)=-1$, it follows, by definition of the multisets $A_i$ and $B_i$, that
$$
\mu_S([0,x_1,0]) = {(-1)}^{x_1} = {(-1)}^{x_1}(\m_{A_0}-\m_{A_1}+2\m_{B_0}-\m_{B_{-1}}-\m_{B_1})(0).
$$
For $x_0=1$, since $\mu_S([1,0,0])=\mu_S(a)=-1$ and $\mu_S([1,1,0])=\mu_S(a+(a+d))=c_2(0,2a+d)-c_1(0,2a+d)=2-1=1$, it follows, by definition of the multisets $A_i$ and $B_i$, that
$$
\mu_S([1,x_1,0]) = {(-1)}^{x_1+1} = {(-1)}^{x_1}(\m_{A_0}-\m_{A_1}+2\m_{B_0}-\m_{B_{-1}}-\m_{B_1})(1).
$$
Suppose now that $x_0\geq 2$. From Proposition~\ref{prop1}, we have
$$\mu_S( [x_0,x_1,0] ) = \mu_S( [x_0-(q+d), x_1,0])  + \mu_S( [x_0-(q+d)-1,x_1,q-1] ) \\[1.5ex]
 - \mu_S( (x_0-2(q+d), x_1,q-1] ). $$
By induction hypothesis, we have
$$\mu_S( [x_0-(q+d),x_1,0] )\\ = {(-1)}^{x_1} \left( \m_{A_0}-\m_{A_1}+2\m_{B_0}-\m_{B_{-1}}-\m_{B_1}  \right)(x_0 - (q+d)),$$
$$\mu_S( [x_0-(q+d)-1,x_1, q-1])\\ = {(-1)}^{x_1} \left( 2\m_{C_0}-\m_{C_{-1}}-\m_{C_1}  \right)(x_0-(2q+d))$$
and
$$\mu_S( [x_0-2(q+d)-1,x_1,q-1] ) \\ = {(-1)}^{x_1} \left( 2\m_{C_0}-\m_{C_{-1}}-\m_{C_1}  \right)(x_0-(3q+2d)).$$
By Lemma~\ref{lem2}, since $x_0\geq2$, we already know that
$$
\m_{A_0}(x_0-(q+d)) = \m_{A_0}(x_0)\quad \text{and}\quad \m_{A_1}(x_0-(q+d)) = \m_{A_1}(x_0).
$$
Moreover, by Lemma~\ref{lem3}, we have
$$
\m_{C_i}(x_0-(2q+d)) = \m_{B_i}(x_0)\quad \text{and}\quad \m_{C_i}(x_0-(3q+2d)) = \m_{B_i}(x_0-(q+d)),
$$
for all $i\in\{-1,0,1\}$. Therefore,
$$
\mu_S( [x_0,x_1,0] ) \begin{array}[t]{l}
= {(-1)}^{x_1} \begin{array}[t]{l}
\left[ \left(\m_{A_0}-\m_{A_1}\right)(x_0-(q+d)) \right. \\
+ \left(2\m_{B_0}-\m_{B_{-1}}-\m_{B_1}\right)(x_0-(q+d)) \\
+ \left( 2\m_{C_0}-\m_{C_{-1}}-\m_{C_1}  \right)(x_0-(2q+d)) \\
\left. - \left( 2\m_{C_0}-\m_{C_{-1}}-\m_{C_1}  \right)(x_0-(3q+2d)) \right]
\end{array} \\
= {(-1)}^{x_1} \begin{array}[t]{l}
\left[ \left(\m_{A_0}-\m_{A_1}\right)(x_0) \right. \\
+ \left(2\m_{B_0}-\m_{B_{-1}}-\m_{B_1}\right)(x_0-(q+d)) \\
+ \left( 2\m_{B_0}-\m_{B_{-1}}-\m_{B_1}  \right)(x_0) \\
\left. - \left( 2\m_{B_0}-\m_{B_{-1}}-\m_{B_1}  \right)(x_0-(q+d)) \right]
\end{array} \\
= {(-1)}^{x_1} \left( \m_{A_0}-\m_{A_1}+2\m_{B_0}-\m_{B_{-1}}-\m_{B_1}  \right)(x_0).
\end{array}
$$

\par Case b) $x_2= 1$.\\ 
For $x_0=0$, since $\mu_S([0,0,1])=\mu_S(a+2d)=-1$ and $\mu_S([0,1,1])=\mu_S((a+d)+(a+2d))=c_2(0,2a+3d)-c_1(0,2a+3d)=2-1=1$, it follows, by definition of the multisets $C_i$, that
$$
\mu_S([0,x_1,1]) = {(-1)}^{x_1+1} = {(-1)}^{x_1}(2\m_{C_0}-\m_{C_{-1}}-\m_{C_1})(-1).
$$
For $x_0=1$, since $\mu_S([1,0,1])=\mu_S(a+(a+2d))=c_2(0,2a+2d)-c_1(0,2a+2d)=3-1=2$ and $\mu_S([1,1,1])=\mu_S(a+(a+d)+(a+2d))=-c_3(0,3a+3d)+c_2(0,3a+3d)-c_1(0,3a+3d)=-7+10-1=2$, it follows, by definition of the multisets $C_i$, that
$$
\mu_S([1,x_1,1]) = {(-1)}^{x_1}2 = {(-1)}^{x_1}(2\m_{C_0}-\m_{C_{-1}}-\m_{C_1})(0).
$$
Suppose now that $x_0\geq 2$. From Proposition~\ref{prop1}, we have
$$
\mu_S( [x_0, + x_1,1] ) =\mu_S( [x_0-(q+d), x_1,1] ) 
 + \mu_S( [x_0-1,x_1,0] ) - \mu_S( [x_0-(q+d)-1,x_1,0] ).
$$
By using Lemmas \ref{lem2} and \ref{lem3} and the induction hypothesis, we have
$$\mu_S( [x_0-(q+d),x_1,1])\\ = {(-1)}^{x_1} \left( 2\m_{C_0}-\m_{C_{-1}}-\m_{C_1}  \right)(x_0 - (q+d) - 1),$$
$$\mu_S( [x_0-1,x_1,0] )\\ = {(-1)}^{x_1} \left( \m_{A_0}-\m_{A_1}+2\m_{B_0}-\m_{B_{-1}}-\m_{B_1} \right)(x_0-1)$$
and
$$\mu_S( [x_0-(q+d)-1,x_1,0) \\ = {(-1)}^{x_1} \left( \m_{A_0}-\m_{A_1}+2\m_{B_0}-\m_{B_{-1}}-\m_{B_1} \right)(x_0-(q+d)-1).$$

First, since the multiset difference $C_i\setminus B_i$ is equal to $A_i$ for all $i\in\{-1,0,1\}$, it follows that
$$
\m_{C_i}-\m_{B_i} = \m_{A_i}.
$$
Therefore,
$$
\mu_S( [x_0,x_1,1]) \begin{array}[t]{l}
= {(-1)}^{x_1} \begin{array}[t]{l}
\left[ \left(\m_{A_0}-\m_{A_1}\right)(x_0-1) - \left(\m_{A_0}-\m_{A_1}\right)(x_0-(q+d)-1) \right. \\
+ \left(2\m_{C_0}-\m_{C_{-1}}-\m_{C_1}\right)(x_0-(q+d)-1) \\
- \left( 2\m_{B_0}-\m_{B_{-1}}-\m_{B_1}  \right)(x_0-(q+d)-1) \\
\left. + \left( 2\m_{B_0}-\m_{B_{-1}}-\m_{B_1}  \right)(x_0-1) \right] 
\end{array} \\
= {(-1)}^{x_1} \begin{array}[t]{l}
\left[ \left(\m_{A_0}-\m_{A_1}\right)(x_0-1) - \left(\m_{A_0}-\m_{A_1}\right)(x_0-(q+d)-1) \right. \\
+ \left(2\m_{A_0}-\m_{A_{-1}}-\m_{A_1}\right)(x_0-(q+d)-1) \\
\left. + \left( 2\m_{B_0}-\m_{B_{-1}}-\m_{B_1}  \right)(x_0-1) \right]\\
\end{array} \\
= {(-1)}^{x_1} \begin{array}[t]{l}
\left[ \left(\m_{A_0}-\m_{A_1}\right)(x_0-1) + \left(\m_{A_0}-\m_{A_{-1}}\right)(x_0-(q+d)-1) \right. \\
\left. + \left( 2\m_{B_0}-\m_{B_{-1}}-\m_{B_1}  \right)(x_0-1) \right].
\end{array}
\end{array}
$$
Moreover, by Lemma~\ref{lem2}, since $x_0\geq2$, we know that
$$
\m_{A_0}(x_0-(q+d)-1) = \m_{A_0}(x_0-1)\quad \text{and}\quad \m_{A_{-1}}(x_0-(q+d)-1) = \m_{A_{-1}}(x_0-1).
$$
Finally, in this case, we obtain
$$
\mu_S( [x_0,x_1,1]) \begin{array}[t]{l}
= {(-1)}^{x_1} \begin{array}[t]{l}
\left[ \left(\m_{A_0}-\m_{A_1}\right)(x_0-1) + \left(\m_{A_0}-\m_{A_{-1}}\right)(x_0-(q+d)-1) \right. \\
\left. + \left( 2\m_{B_0}-\m_{B_{-1}}-\m_{B_1}  \right)(x_0-1) \right].
\end{array} \\
= {(-1)}^{x_1} \left[ \left(2\m_{A_0}-\m_{A_{-1}}-\m_{A_{1}}\right)(x_0-1) + \left( 2\m_{B_0}-\m_{B_{-1}}-\m_{B_1}  \right)(x_0-1) \right] \\
= {(-1)}^{x_1} \left( 2\m_{C_0}-\m_{C_{-1}}-\m_{C_1}  \right)(x_0-1).
\end{array}
$$

\par Case c) $x_2\geq 2$.\\ 
From Proposition~\ref{prop1}, we have
$$\mu_S( [x_0,x_1,x_2]) = \mu_S( [x_0-(q+d),x_1,x_2 ]) \\[1.5ex]
 + \mu_S( [x_0-1,x_1,x_2-1] )
 - \mu_S( [x_0-(q+d)-1,x_1,x_2-1] ).$$
By induction, we have
$$ \mu_S( [x_0-(q+d),x_1x_2] )\\ = {(-1)}^{x_1} \left( 2\m_{C_0}-\m_{C_{-1}}-\m_{C_1}  \right)(x_0 - x_2 - (q+d)),$$
$$\mu_S( [x_0-1,x_1,x_2-1] )\\ = {(-1)}^{x_1} \left( 2\m_{C_0}-\m_{C_{-1}}-\m_{C_1} \right)(x_0-x_2)$$
and
$$\mu_S( [x_0-(q+d)-1,x_1,x_2-1] ) \\ = {(-1)}^{x_1} \left( 2\m_{C_0}-\m_{C_{-1}}-\m_{C_1} \right)(x_0-x_2-(q+d)).$$
Therefore,
$$
\mu_S( [x_0,x_1 ,x_2]) \begin{array}[t]{l}
= {(-1)}^{x_1} \begin{array}[t]{l}
\left[ \left(2\m_{C_0}-\m_{C_{-1}}-\m_{C_1}\right)(x_0-x_2-(q+d)) \right. \\
+ \left( 2\m_{C_0}-\m_{C_{-1}}-\m_{C_1}  \right)(x_0-x_2) \\
\left. - \left( 2\m_{C_0}-\m_{C_{-1}}-\m_{C_1}  \right)(x_0-x_2-(q+d)) \right]
\end{array} \\
= {(-1)}^{x_1} \left( 2\m_{C_0}-\m_{C_{-1}}-\m_{C_1}  \right)(x_0-x_2).
\end{array}
$$
This completes the proof of Theorem~\ref{thm1}.
\littbox
\scriptsize
\begin{center}
\begin{longtable}{|r|rrrrrrrrrrr|}

\caption{ First values of $\mu_S([x_0,0,x_2])$ for $q=11$ and $d=5$.} \label{table} \\

\hline
\multicolumn{1}{|r|}{\backslashbox{$x_0$}{$x_2$}} & \multicolumn{1}{r}{$0$} & \multicolumn{1}{r}{$1$} & \multicolumn{1}{r}{$2$} & \multicolumn{1}{r}{$3$} & \multicolumn{1}{r}{$4$} & \multicolumn{1}{r}{$5$} & \multicolumn{1}{r}{$6$} & \multicolumn{1}{r}{$7$} & \multicolumn{1}{r}{$8$} & \multicolumn{1}{r}{$9$} & \multicolumn{1}{r|}{$10$} \\
\hline
\endfirsthead

\multicolumn{12}{c}%
{\tablename\ \thetable{} -- continued from previous page} \\
\hline
\multicolumn{1}{|r|}{\backslashbox{$x_0$}{$x_2$}} & \multicolumn{1}{r}{$0$} & \multicolumn{1}{r}{$1$} & \multicolumn{1}{r}{$2$} & \multicolumn{1}{r}{$3$} & \multicolumn{1}{r}{$4$} & \multicolumn{1}{r}{$5$} & \multicolumn{1}{r}{$6$} & \multicolumn{1}{r}{$7$} & \multicolumn{1}{r}{$8$} & \multicolumn{1}{r}{$9$} & \multicolumn{1}{r|}{$10$} \\
\hline\endhead

\hline \multicolumn{12}{|c|}{Continued on next page} \\ \hline
\endfoot

\hline 
\endlastfoot

$0$ & $\mathbf{1}$ & $\mathbf{-1}$ & $0$ & $0$ & $0$ & $0$ & $0$ & $0$ & $0$ & $0$ & $0$ \\
$1$ & $\mathbf{-1}$ & $\mathbf{2}$ & $\mathbf{-1}$ & $0$ & $0$ & $0$ & $0$ & $0$ & $0$ & $0$ & $0$ \\
$2$ & $0$ & $\mathbf{-1}$ & $\mathbf{2}$ & $\mathbf{-1}$ & $0$ & $0$ & $0$ & $0$ & $0$ & $0$ & $0$ \\
 & $0$ & $0$ & $\mathbf{-1}$ & $\mathbf{2}$ & $\mathbf{-1}$ & $0$ & $0$ & $0$ & $0$ & $0$ & $0$ \\
 & $0$ & $0$ & $0$ & $\mathbf{-1}$ & $\mathbf{2}$ & $\mathbf{-1}$ & $0$ & $0$ & $0$ & $0$ & $0$ \\
 & $0$ & $0$ & $0$ & $0$ & $\mathbf{-1}$ & $\mathbf{2}$ & $\mathbf{-1}$ & $0$ & $0$ & $0$ & $0$ \\
 & $0$ & $0$ & $0$ & $0$ & $0$ & $\mathbf{-1}$ & $\mathbf{2}$ & $\mathbf{-1}$ & $0$ & $0$ & $0$ \\
 & $0$ & $0$ & $0$ & $0$ & $0$ & $0$ & $\mathbf{-1}$ & $\mathbf{2}$ & $\mathbf{-1}$ & $0$ & $0$ \\
 & $0$ & $0$ & $0$ & $0$ & $0$ & $0$ & $0$ & $\mathbf{-1}$ & $\mathbf{2}$ & $\mathbf{-1}$ & $0$ \\
 & $0$ & $0$ & $0$ & $0$ & $0$ & $0$ & $0$ & $0$ & $\mathbf{-1}$ & $\mathbf{2}$ & $\mathbf{-1}$ \\
$q-1$ & $0$ & $0$ & $0$ & $0$ & $0$ & $0$ & $0$ & $0$ & $0$ & $\mathbf{-1}$ & $\mathbf{2}$ \\
$q$ & $0$ & $0$ & $0$ & $0$ & $0$ & $0$ & $0$ & $0$ & $0$ & $0$ & $\mathbf{-1}$ \\
$q+1$ & $0$ & $0$ & $0$ & $0$ & $0$ & $0$ & $0$ & $0$ & $0$ & $0$ & $0$ \\
 & $0$ & $0$ & $0$ & $0$ & $0$ & $0$ & $0$ & $0$ & $0$ & $0$ & $0$ \\
 & $0$ & $0$ & $0$ & $0$ & $0$ & $0$ & $0$ & $0$ & $0$ & $0$ & $0$ \\
$q+d-1$ & $\mathbf{0}$ & $0$ & $0$ & $0$ & $0$ & $0$ & $0$ & $0$ & $0$ & $0$ & $0$ \\

$q+d$ & $\mathbf{1}$ & $\mathbf{-1}$ & $0$ & $0$ & $0$ & $0$ & $0$ & $0$ & $0$ & $0$ & $0$ \\
$q+d+1$ & $\mathbf{-1}$ & $\mathbf{2}$ & $\mathbf{-1}$ & $0$ & $0$ & $0$ & $0$ & $0$ & $0$ & $0$ & $0$ \\
$q+d+2$ & $0$ & $\mathbf{-1}$ & $\mathbf{2}$ & $\mathbf{-1}$ & $0$ & $0$ & $0$ & $0$ & $0$ & $0$ & $0$ \\
 & $0$ & $0$ & $\mathbf{-1}$ & $\mathbf{2}$ & $\mathbf{-1}$ & $0$ & $0$ & $0$ & $0$ & $0$ & $0$ \\
 & $0$ & $0$ & $0$ & $\mathbf{-1}$ & $\mathbf{2}$ & $\mathbf{-1}$ & $0$ & $0$ & $0$ & $0$ & $0$ \\
 & $0$ & $0$ & $0$ & $0$ & $\mathbf{-1}$ & $\mathbf{2}$ & $\mathbf{-1}$ & $0$ & $0$ & $0$ & $0$ \\
 & $0$ & $0$ & $0$ & $0$ & $0$ & $\mathbf{-1}$ & $\mathbf{2}$ & $\mathbf{-1}$ & $0$ & $0$ & $0$ \\
 & $0$ & $0$ & $0$ & $0$ & $0$ & $0$ & $\mathbf{-1}$ & $\mathbf{2}$ & $\mathbf{-1}$ & $0$ & $0$ \\
 & $0$ & $0$ & $0$ & $0$ & $0$ & $0$ & $0$ & $\mathbf{-1}$ & $\mathbf{2}$ & $\mathbf{-1}$ & $0$ \\
 & $0$ & $0$ & $0$ & $0$ & $0$ & $0$ & $0$ & $0$ & $\mathbf{-1}$ & $\mathbf{2}$ & $\mathbf{-1}$ \\
$2q+d-1$ & $\mathbf{-1}$ & $0$ & $0$ & $0$ & $0$ & $0$ & $0$ & $0$ & $0$ & $\mathbf{-1}$ & $\mathbf{2}$ \\
$2q+d$ & $\mathbf{2}$ & $\mathbf{-1}$ & $0$ & $0$ & $0$ & $0$ & $0$ & $0$ & $0$ & $0$ & $\mathbf{-1}$ \\
$2q+d+1$ & $\mathbf{-1}$ & $\mathbf{2}$ & $\mathbf{-1}$ & $0$ & $0$ & $0$ & $0$ & $0$ & $0$ & $0$ & $0$ \\
 & $0$ & $\mathbf{-1}$ & $\mathbf{2}$ & $\mathbf{-1}$ & $0$ & $0$ & $0$ & $0$ & $0$ & $0$ & $0$ \\
 & $0$ & $0$ & $\mathbf{-1}$ & $\mathbf{2}$ & $\mathbf{-1}$ & $0$ & $0$ & $0$ & $0$ & $0$ & $0$ \\
$2q+2d-1$ & $\mathbf{0}$ & $0$ & $0$ & $\mathbf{-1}$ & $\mathbf{2}$ & $\mathbf{-1}$ & $0$ & $0$ & $0$ & $0$ & $0$ \\

$2q+2d$ & $\mathbf{1}$ & $\mathbf{-1}$ & $0$ & $0$ & $\mathbf{-1}$ & $\mathbf{2}$ & $\mathbf{-1}$ & $0$ & $0$ & $0$ & $0$ \\
$2q+2d+1$ & $\mathbf{-1}$ & $\mathbf{2}$ & $\mathbf{-1}$ & $0$ & $0$ & $\mathbf{-1}$ & $\mathbf{2}$ & $\mathbf{-1}$ & $0$ & $0$ & $0$ \\
$2q+2d+2$ & $0$ & $\mathbf{-1}$ & $\mathbf{2}$ & $\mathbf{-1}$ & $0$ & $0$ & $\mathbf{-1}$ & $\mathbf{2}$ & $\mathbf{-1}$ & $0$ & $0$ \\
 & $0$ & $0$ & $\mathbf{-1}$ & $\mathbf{2}$ & $\mathbf{-1}$ & $0$ & $0$ & $\mathbf{-1}$ & $\mathbf{2}$ & $\mathbf{-1}$ & $0$ \\
 & $0$ & $0$ & $0$ & $\mathbf{-1}$ & $\mathbf{2}$ & $\mathbf{-1}$ & $0$ & $0$ & $\mathbf{-1}$ & $\mathbf{2}$ & $\mathbf{-1}$ \\
 & $0$ & $0$ & $0$ & $0$ & $\mathbf{-1}$ & $\mathbf{2}$ & $\mathbf{-1}$ & $0$ & $0$ & $\mathbf{-1}$ & $\mathbf{2}$ \\
 & $0$ & $0$ & $0$ & $0$ & $0$ & $\mathbf{-1}$ & $\mathbf{2}$ & $\mathbf{-1}$ & $0$ & $0$ & $\mathbf{-1}$ \\
 & $0$ & $0$ & $0$ & $0$ & $0$ & $0$ & $\mathbf{-1}$ & $\mathbf{2}$ & $\mathbf{-1}$ & $0$ & $0$ \\
 & $0$ & $0$ & $0$ & $0$ & $0$ & $0$ & $0$ & $\mathbf{-1}$ & $\mathbf{2}$ & $\mathbf{-1}$ & $0$ \\
 & $0$ & $0$ & $0$ & $0$ & $0$ & $0$ & $0$ & $0$ & $\mathbf{-1}$ & $\mathbf{2}$ & $\mathbf{-1}$ \\
$3q+2d-1$ & $\mathbf{-1}$ & $0$ & $0$ & $0$ & $0$ & $0$ & $0$ & $0$ & $0$ & $\mathbf{-1}$ & $\mathbf{2}$ \\
$3q+2d$ & $\mathbf{2}$ & $\mathbf{-1}$ & $0$ & $0$ & $0$ & $0$ & $0$ & $0$ & $0$ & $0$ & $\mathbf{-1}$ \\
$3q+2d+1$ & $\mathbf{-1}$ & $\mathbf{2}$ & $\mathbf{-1}$ & $0$ & $0$ & $0$ & $0$ & $0$ & $0$ & $0$ & $0$ \\
 & $0$ & $\mathbf{-1}$ & $\mathbf{2}$ & $\mathbf{-1}$ & $0$ & $0$ & $0$ & $0$ & $0$ & $0$ & $0$ \\
 & $0$ & $0$ & $\mathbf{-1}$ & $\mathbf{2}$ & $\mathbf{-1}$ & $0$ & $0$ & $0$ & $0$ & $0$ & $0$ \\
$3q+3d-1$ & $\mathbf{0}$ & $0$ & $0$ & $\mathbf{-1}$ & $\mathbf{2}$ & $\mathbf{-1}$ & $0$ & $0$ & $0$ & $0$ & $0$ \\

$3q+3d$ & $\mathbf{1}$ & $\mathbf{-1}$ & $0$ & $0$ & $\mathbf{-1}$ & $\mathbf{2}$ & $\mathbf{-1}$ & $0$ & $0$ & $0$ & $0$ \\
$3q+3d+1$ & $\mathbf{-1}$ & $\mathbf{2}$ & $\mathbf{-1}$ & $0$ & $0$ & $\mathbf{-1}$ & $\mathbf{2}$ & $\mathbf{-1}$ & $0$ & $0$ & $0$ \\
$3q+3d+2$ & $0$ & $\mathbf{-1}$ & $\mathbf{2}$ & $\mathbf{-1}$ & $0$ & $0$ & $\mathbf{-1}$ & $\mathbf{2}$ & $\mathbf{-1}$ & $0$ & $0$ \\
 & $0$ & $0$ & $\mathbf{-1}$ & $\mathbf{2}$ & $\mathbf{-1}$ & $0$ & $0$ & $\mathbf{-1}$ & $\mathbf{2}$ & $\mathbf{-1}$ & $0$ \\
 & $0$ & $0$ & $0$ & $\mathbf{-1}$ & $\mathbf{2}$ & $\mathbf{-1}$ & $0$ & $0$ & $\mathbf{-1}$ & $\mathbf{2}$ & $\mathbf{-1}$ \\

\end{longtable}

\end{center}

\end{document}